\documentclass[letterpaper, 10 pt, conference]{ieeeconf} 

\IEEEoverridecommandlockouts                              

\usepackage{graphics} 
\usepackage{epsfig} 
\usepackage{amsmath} 
\usepackage{amssymb}  
\usepackage{tikz}
\usepackage{mathtools}
\usetikzlibrary{arrows.meta,positioning,calc}

\title{\LARGE \bf
Nested Extremum Seeking Converges to Stackelberg Equilibrium
}

\author{Brad Ratto$^{1,2}$, Alan Williams$^2$, Miroslav Krstić$^1$, Tamer Ba\c{s}ar$^3$, Alexander Scheinker$^2$
\thanks{This work was funded by the U.S. Department of Energy (DOE) Los Alamos National Laboratory LDRD Program Directed Research (DR) Project No. 20220074DR.}
\thanks{$^{1}$Brad Ratto and Miroslav Krstić are with the Department of Mechanical and Aerospace Engineering, University of California San Diego, La Jolla, CA 92093 USA (e-mail: ratto.brad@lanl.gov; krstic@ucsd.edu).}
\thanks{$^{2}$Brad Ratto, Alan Williams, and Alexander Scheinker are with Los Alamos National Laboratory, Los Alamos, NM 87545, USA (e-mail: awilliams@lanl.gov, ascheink@lanl.gov).}
\thanks{$^{3}$Tamer Ba\c{s}ar is with the Department of Electrical and Computer Engineering, University of Illinois Urbana--Champaign, Urbana, IL 61801, USA (e-mail: basar1@illinois.edu).}}

\begin{document}

\newtheorem{remark}{Remark}
\newtheorem{definition}{Definition}
\newtheorem{assumption}{Assumption}
\newtheorem{theorem}{Theorem}
\newtheorem{lemma}{Lemma}
\newtheorem{proposition}{Proposition}
\newtheorem{corollary}{Corollary}

\maketitle
\thispagestyle{empty}
\pagestyle{empty}

\begin{abstract}
The nested Extremum Seeking (nES) algorithm is a model-free optimization method that has been shown to converge to a neighborhood of a Nash equilibrium. In this work, we demonstrate that the same nES dynamics can instead be made to converge to a neighborhood of a Stackelberg (leader--follower) equilibrium by imposing a different scaling law on the algorithm's design parameters. For the two--level nested case, using Lie--bracket averaging and singular perturbation arguments, we provide a rigorous stability proof showing semi-global practical asymptotic convergence to a Stackelberg equilibrium under appropriate time-scale separation. The results reveal that equilibrium selection, Nash versus Stackelberg, depends not on modifying the closed-loop dynamics, but on the hierarchical scaling of design parameters and the induced time-scale structure. We demonstrate this effect using a simple quadratic example and the canonical Fish War game. The Stackelberg variant of nES provides a model-free framework for hierarchical optimization in multi-time-scale systems, with potential applications in power grids, networked dynamical systems, and tuning of particle accelerators.
\end{abstract}

\section{Introduction}

Extremum Seeking (ES) is a real-time, model-free optimization method that adjusts system parameters to minimize or maximize an unknown but measurable performance metric. ES has been applied to optimizing energy systems, regulating biological processes, operating particle accelerators, and stabilizing nonlinear systems \cite{ghaffari2014power, manzie2009extremum, scheinker2020online, scheinker2012minimum,scheinker2013extremum,scheinker2014extremum,scheinker2014hardware}. ES has also been incorporated within the architectures of generative diffusion models for time-varying systems \cite{scheinker2024cdvae}, and with guaranteed safety constraints \cite{williams2026local, williams2024semiglobal}.

\begin{figure*}[t!]
\centering
\includegraphics[width=\linewidth]{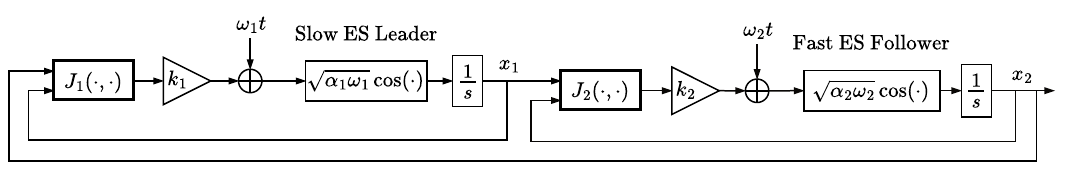}
\caption{Block diagram of the nES architecture for $n=2$ nestings. The Slow ES Leader tunes $x_1$, and the Fast ES Follower tunes $x_2$.}\label{fig:nes-diag}
\end{figure*}

Because ES handles uncertainty and complex dynamics, it has been used to coordinate interacting agents, including Nash equilibrium (NE) seeking in noncooperative games for mobile sensor networks \cite{stankovic2011distributed} and more general nonlinear dynamics-driven settings \cite{frihauf2011nash}. Recent surveys document model-free Nash seeking via ES and extensions to delays and PDE dynamics \cite{frihauf2011nash}. Complementary to NE, another non-cooperative equilibrium solution is the Stackelberg equilibrium (SE) \cite{basar1999dynamic}. For coupled systems, it is often advantageous or necessary to tune parameters according to a hierarchy of objectives. A game admits a SE when a hierarchical structure is imposed in which the leader (player 1) chooses their action $x_1$ first; the follower (player 2), having observed $x_1$, chooses their best response action $x_2$ by solving the following optimization problem: 
\begin{equation}
\min_{x_2\in\mathcal{X}_2} J_{\rm{F}}(x_1,x_2),
\label{eq:follower-min}
\end{equation}
where $J_{\rm{F}}: {\cal X}_1\times {\cal X}_2 \rightarrow \mathbb{R}$ is the cost function of the follower, and ${\cal X}_i \subset \mathbb{R}^{n_i}$ are the action sets of the two players. Let $T_{\rm F}(x_1)$ denote the follower's best response function given by \eqref{eq:follower-min}, which is assumed to be unique for each $x_1\in{\cal X}_1$. We denote a SE by $(x_1 ^{\textrm S},x_2 ^{\textrm S})$, which is obtained by solving
\begin{align}
x_1 ^{\textrm S} \in \arg\min_{x_1\in\mathcal{X}_1} J_{\rm{L}}\big(x_1,T_{\rm F}(x_1)\big), \quad x_2 ^{\textrm S} = T_{\rm F}(x_1 ^{\textrm S}). \label{eq:x^S condition} 
\end{align}
That is, the leader finds its optimal action with respect to $J_{\rm{L}}: {\cal X}_1\times {\cal X}_2 \rightarrow \mathbb{R}$ given the best response of the follower \cite{basar1999dynamic,simaan1973stackelberg}.

Such leader–follower games appear in many applications: smart grids and networked systems \cite{mojica2022stackelberg}, hierarchical control/economic regulation settings modeled via dynamic games \cite{basar1999dynamic}, demand-response and pricing in smart grids \cite{Maharjan2013TSG,Alshehri2015CDC}, power control and channel allocation in cognitive radio networks \cite{BloemAlpcanBasar2007GameComm}, and mixed leadership in economics \cite{BasarHaurieRicci1985JEDC}. From a computational point of view, while NE seeking is well studied, comparatively less work has addressed Stackelberg games.

One early approach used a genetic algorithm for off-line computation of SE \cite{vallee1999off}; this was later complemented by an online approach using neural networks to estimate the follower best-response map \cite{alemdar2003line}. Other examples include two–time–scale gradient descent–ascent, which proves local stability/convergence under time-scale separation, yielding a fast follower-slow leader scheme \cite{fiez2019convergence,fiez2021timescale}; Stackelberg actor–critic in reinforcement learning, which runs follower/critic updates fast and leader policy updates slow \cite{zheng2022stackelberg}; and population/game-theoretic control formulations that analyze leader coordination with fast follower dynamics \cite{mojica2022stackelberg}.

In this paper, we adopt the same separation-of-time-scales concept but implement a \emph{model-free} approach via ES. We leverage the nested Extremum Seeking (nES) architecture previously shown to be a model-free method that converges practically to a NE \cite{Ratto2026ACC}. The nES framework employs nested feedback loops, each coordinating the optimization of multiple parameters with respect to different objective functions. Our principal contribution is a permutation of Lie--bracket averaging and singular perturbation tools that leads to a different convergence result than in the Nash nES seeking case \cite{Ratto2026ACC}. A similar singularly perturbed Lie--bracket tool is presented in \cite{durr2017extremum}; however, our analysis requires an additional Lie--bracket averaging step. The stability proof presented is for the $n=2$ nested case, which captures the core ideas and paves the way for generalization. 

To prove convergence and stability, we perform a three-stage analysis. First, we apply Lie--bracket averaging to the fast state. Then, we use singular perturbation to define a boundary--layer model (BLM), and in turn the reduced order model (ROM) and quasi--steady state (QSS). Finally, we apply Lie--bracket averaging to the ROM, which allows us to define the averaged ROM. By analyzing the stability of the BLM and the averaged ROM, we use a stability result regarding averaged systems to claim convergence of the original system to a neighborhood of the SE under standard ES assumptions. Re-framed in leader–follower terms, the Fast ES Follower tracks the best-response map $x_2=T_{\rm F}(x_1)$ by minimizing $J_{\rm F}(x_1,x_2)$ at the current $x_1$, while the Slow ES Leader adjusts $x_1$ to reduce $J_{\rm L}(x_1,T_{\rm F}(x_1))$ along the manifold $T_{\rm F}(x_1)$.

\section{Preliminaries}
We use this section to introduce concepts and develop the necessary tools used in the analysis of the nES algorithm.

\subsection{Strong Monotonicity and Stability Lemma}

\begin{definition}[Strong monotonicity]
\label{def:strong-monotone}
Let $q(x):\mathbb{R}\to\mathbb R$ be continuously differentiable. We say that $q(x)$ is \emph{$\mu$--strongly monotone on $\mathbb{R}$} if there exists $\mu>0$ such that
\begin{equation}
q'(x)\ge \mu,\qquad \forall x\in\mathbb{R}.
\end{equation}
\end{definition}

\begin{lemma}
\label{lem:scalar-exp-stable}
Consider $q(x):\mathbb{R}\to\mathbb R$ to be continuously differentiable and $\mu$--strongly monotone on $\mathbb{R}$ for some $\mu>0$. Let $q(x^*)=0$ for some $x^*\in\mathbb{R}$. Then, $x^*$ is the unique exponentially stable equilibrium of the system
\begin{equation}
\dot x=-c\,q(x),
\end{equation}
where $c>0$. In particular, 
\begin{equation}
|x(t)-x^*|\le |x(t_0)-x^*|\,e^{-c\mu (t-t_0)},\qquad \forall t\ge t_0.
\end{equation}
\end{lemma}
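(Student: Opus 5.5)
The plan is a direct Lyapunov argument with $V(x)=\tfrac12 (x-x^*)^2$. The key inequality comes from the mean value theorem: since $q$ is continuously differentiable and $q(x^*)=0$, for every $x$ there is $\xi$ between $x$ and $x^*$ with $q(x)=q'(\xi)(x-x^*)$. Strong monotonicity in the sense of Definition~\ref{def:strong-monotone} then gives
\begin{equation*}
(x-x^*)\,q(x)=q'(\xi)(x-x^*)^2\ge \mu (x-x^*)^2 .
\end{equation*}

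First I will settle uniqueness. The same identity shows $|q(x)|\ge \mu|x-x^*|$, so $q(x)\neq 0$ for $x\neq x^*$. Hence $x^*$ is the only zero of $q$, and therefore the only equilibrium of $\dot x=-c\,q(x)$. Local Lipschitzness of $q$ (it is $C^1$) guarantees local existence and uniqueness of solutions.

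Next, along solutions,
\begin{equation*}
\dot V=-c\,(x-x^*)\,q(x)\le -c\mu (x-x^*)^2=-2c\mu V .
\end{equation*}
The comparison lemma (equivalently, Grönwall's inequality applied to $e^{2c\mu t}V$) yields $V(t)\le V(t_0)e^{-2c\mu(t-t_0)}$. Taking square roots gives the stated bound $|x(t)-x^*|\le |x(t_0)-x^*|e^{-c\mu(t-t_0)}$. Because this bound shows $x(t)$ stays bounded, solutions cannot escape in finite time and so exist for all $t\ge t_0$. Exponential stability, which is in fact global, follows immediately.

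The only point needing care is justifying the sign and magnitude bound for every $x$, not just near $x^*$. The mean value theorem step handles this, since $q'\ge\mu$ holds on all of $\mathbb{R}$. The rest is routine.
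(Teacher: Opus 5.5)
Your proposal is correct and follows the paper's proof essentially step for step: the same Lyapunov function $V=\tfrac12|x-x^*|^2$, the same mean-value-theorem bound $(x-x^*)q(x)\ge\mu|x-x^*|^2$, and the comparison lemma giving $V(t)\le V(t_0)e^{-2c\mu(t-t_0)}$, followed by taking square roots. The differences are cosmetic: you derive uniqueness from $|q(x)|\ge\mu|x-x^*|$ rather than from strict monotonicity of $q$, and you add a useful remark that the bound rules out finite escape, so solutions exist for all $t\ge t_0$.
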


\begin{proof}
Since $q'(x)\ge\mu>0$ for all $x\in\mathbb{R}$, $q$ is strictly increasing on $\mathbb{R}$; hence $q(x)=0$ has at most one root, and thus $x^*$ is unique. Consider $V(x)=\frac12|x-x^*|^2$. Along trajectories,
\begin{equation}
\dot V=-c(x-x^*)\big(q(x)-q(x^*)\big).
\end{equation}
By the mean value theorem, $q(x)-q(x^*)=q'(\tilde x)(x-x^*)$ for some $\tilde x$ between $x$ and $x^*$; hence
\begin{equation}
(x-x^*)\big(q(x)-q(x^*)\big)=q'(\tilde x)|x-x^*|^2\ge \mu|x-x^*|^2.
\end{equation}
Therefore $\dot V\le -c\mu|x-x^*|^2=-2c\mu V$, and by \cite[Lemma 3.4]{khalil_nonlinear_2002} (Comparison Lemma)
    \begin{equation}
        V(x(t))\le V(x(t_0))e^{-2c\mu(t-t_0)}, \quad \forall t \ge t_0.
    \end{equation}
We then obtain $|x(t)-x^*| \leq | x(t_0)-x^* | e^{-c\mu (t-t_0)}.$
\end{proof}

\subsection{CTP, GUAS, and SPUAS Definitions}
We now define the converging trajectories property (CTP), a standard attribute used to quantify the convergence of trajectories between the original and averaged forms of a system, as discussed in \cite{moreau2002practical}. 
\begin{definition}[CTP]
\label{def:CTP}
The dynamics $\dot{\bar x}=f(t,\bar x)$ and $\dot x=f^\upsilon(t,x)$ with corresponding solutions $\psi$ and $\psi^\upsilon$ satisfy the CTP if for any compact set $\mathcal{K}\subseteq\mathbb{R}^n$, $T>0$, $\delta>0$, there exists $\upsilon^*>0$ such that for all $t_0\in\mathbb{R}$, for all $x_0\in \mathcal{K}$, and for all $0<\upsilon<\upsilon^*$,
\begin{equation}
\|\psi^\upsilon(t,t_0,x_0)-\psi(t,t_0,x_0)\|<\delta, \quad \forall t\in[t_0,t_0+T].
\end{equation}
\end{definition}
Next, we define the notions of global uniformly asymptotic stability (GUAS) and semi-global practical uniform asymptotic stability (SPUAS), as presented in \cite{moreau2002practical,scheinker2016bounded, teel1998global}. 
\begin{definition}[GUAS]
\label{def:GUAS}
Let $x^*$ be an equilibrium of the system $\dot{\bar x}=f(t,\bar{x})$. The system is GUAS if there exists $\beta\in\mathcal{KL}$ such that for every initial condition $x_0\in \mathbb{R}^n$,
\begin{equation}
\|\bar x(t)-x^*\|\le \beta(\|x_0-x^*\|,t-t_0), \qquad \forall t\ge t_0,
\end{equation}
for any $t_0\in\mathbb R$.
\end{definition}
\begin{definition}[SPUAS]
\label{def:SPUAS}
The system $\dot x=f^\upsilon(t,x)$ is $\upsilon$-SPUAS with residual $\sigma(\upsilon)$ if there exist $x^*\in\mathbb{R}^n$, $\beta\in\mathcal{KL}$,
$\sigma\in\mathcal{K}_\infty$, and $\upsilon^*>0$, such that for every $0<\upsilon<\upsilon^*$,
\begin{equation}
\|x(t)-x^*\|\le \beta(\|x(t_0)-x^*\|,t-t_0)+\sigma(\upsilon),
\end{equation}
$\forall t\ge t_0$, and $x(t_0) \in \mathbb{R}^n$.
\end{definition}

\subsection{Lie--Bracket Averaging}
We now introduce a set of theorems that establish properties between input-affine systems and their Lie--bracket--averaged representations. Consider systems of the form
\begin{equation} \label{eq:gen_form_for_LBA}
    \dot x = b_0(t,x) + \sum_{i=1}^m b_i(t,x) \sqrt{\omega} u_i(t, \omega t).
\end{equation}
We define a corresponding Lie--bracket system 
\begin{equation} \label{eq:gen_LBA_system}
    \dot z = b_0(t,z) + \sum_{1\le i<j\le m} [b_i,b_j](t,z)  \nu_{ij},
\end{equation}
where
\begin{equation}
    \nu_{ji}   =   \frac{1}{2\pi}\int_{0}^{2\pi} u_j(t,\theta) \bigg(\int_{0}^{\theta} u_i(t,\tau) d\tau\bigg)  d\theta,
\end{equation}
and the Lie--bracket of $C^1$ vector fields $f,g$ is
\begin{equation}
    [f,g](t,x)   :=   \frac{\partial g(t,x)}{\partial x} f(t,x) - \frac{\partial f(t,x)}{\partial x} g(t,x).
\end{equation}
We now restate the result from \cite[Theorem 1]{durr2013lie}, which asserts that the solutions of \eqref{eq:gen_form_for_LBA} and \eqref{eq:gen_LBA_system} satisfy the CTP.

\begin{theorem}
    \label{thm:Dur-Avg}
    Consider an arbitrary compact set $\mathcal{K}\subseteq\mathbb{R}^n$, and let the following assumptions be satisfied:
    \begin{itemize}
        \item  the vector fields $b_i(t,x)\in C^2:\mathbb{R}\times\mathbb{R}^n\rightarrow\mathbb{R}^n$ for all $x\in \mathcal{K}$ and $t\in\mathbb{R}$,
        \item the inputs $u_i(t,\theta):\mathbb{R}\times\mathbb{R}\rightarrow\mathbb{R}$ are Lipschitz for all $t\in\mathbb{R}$ and bounded for all $t,\theta\in\mathbb{R}$,
        \item the inputs $u_i(t,\cdot)$ are $2\pi$-periodic in $\theta$, and have zero mean over a period for all $t,\theta\in\mathbb{R}$. 
    \end{itemize}
    Suppose $\mathcal{K}\subseteq \mathcal{B}$, where $\mathcal{B}$ is a set of initial conditions for \eqref{eq:gen_LBA_system} under which \eqref{eq:gen_LBA_system} has unique, uniformly bounded solutions, i.e. there exists $\mathrm A\in(0,\infty)$ such that for all $t_0\in\mathbb{R}$ we have
    \begin{equation}
    z(t_0)\in\mathcal{B}\implies z(t)\in\mathcal{U}_\mathrm A^0, \quad t\ge t_0, 
    \end{equation} 
    where $\mathcal{U}_\mathrm A^0:=\{z(t)\in\mathbb R^n:\inf_{z(t_0)\in\mathcal{B}}|z(t)-z(t_0)|<\mathrm A\}$. Then, for every $\delta\in(0,\infty)$ and $T\in(0,\infty)$, there exists $\omega^*>0$ such that for each $\omega>\omega^*$, for any $t_0\in\mathbb{R}$, and every $x_0\in\mathcal{K}$, there exist unique solutions $x$ and $z$ of \eqref{eq:gen_form_for_LBA} and \eqref{eq:gen_LBA_system} through $x(t_0)=z(t_0)=x_0$, which satisfy  
    \begin{equation}  
    |x(t)-z(t)|<\delta, \quad t\in[t_0,t_0+T].
    \end{equation}
\end{theorem}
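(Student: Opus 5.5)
This is the Lie--bracket averaging theorem of D\"urr et al., restated here, so I would reproduce the structure of that argument rather than look for a new route. The idea is to compare \eqref{eq:gen_form_for_LBA} with \eqref{eq:gen_LBA_system} through a near-identity change of variables that removes the oscillatory terms up to second order.

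For each $i$, introduce the zero-mean periodic antiderivatives
\[
U_i(t,\theta)=\int_0^\theta u_i(t,\tau)\,d\tau-\frac{1}{2\pi}\int_0^{2\pi}\int_0^{s}u_i(t,\tau)\,d\tau\,ds .
\]
These are $2\pi$-periodic in $\theta$ (since each $u_i$ has zero mean), bounded and Lipschitz. Define the new variable
\[
w = x-\frac{1}{\sqrt{\omega}}\sum_i b_i(t,x)\,U_i(t,\omega t).
\]
Differentiating $w$ along \eqref{eq:gen_form_for_LBA} does two things:
\begin{itemize}
\item The $\sqrt{\omega}\,b_i u_i$ terms cancel against the $\theta$-derivative of $U_i$.
\item The cross terms $\frac{\partial b_j}{\partial x}b_i\,u_iU_j$ remain at order one.
\end{itemize}
Write $u_iU_j=\overline{u_iU_j}+\text{(zero-mean oscillation)}$. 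Antisymmetrizing the average, with $\overline{u_iU_j}=-\overline{u_jU_i}$ obtained by integration by parts, produces exactly $\sum_{i<j}[b_i,b_j]\nu_{ij}$. What is left over consists of terms of order $1/\sqrt{\omega}$, plus an order-one zero-mean oscillation in $\omega t$. I would remove that oscillation with a second (first-order) averaging transformation of size $O(1/\omega)$. This leaves $\dot w=b_0+\sum_{i<j}[b_i,b_j]\nu_{ij}+R(t,w,\omega)$ with $|R|\le C/\sqrt{\omega}$ on a neighbourhood of the relevant compact set. The constant $C$ comes from $C^2$ bounds on the $b_i$, boundedness of $u_i,U_i$, and their Lipschitz dependence on $t$; that last ingredient is what handles the $\partial_t U_i$ term.

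Next I compare $w$ with $z$ on $[t_0,t_0+T]$. By hypothesis, $z$ stays in $\mathcal{U}^0_{\mathrm A}$, which is bounded. On the compact $\mathrm A$-neighbourhood of that set, the Lie--bracket vector field is Lipschitz in $z$, uniformly in $t$. Gronwall's inequality then gives $|w(t)-z(t)|\le (C/\sqrt{\omega})\,T e^{LT}$ for as long as $w$ remains in the neighbourhood. Returning to $x$ costs a further $O(1/\sqrt{\omega})$ from the transformation, and initial conditions match up to $O(1/\sqrt{\omega})$, which is also absorbed by Gronwall. Choosing $\omega^*$ so that the total bound is below $\min(\delta,\mathrm A/2)$ finishes the estimate.

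The main obstacle is uniformity, in particular ensuring that $x$ cannot leave the compact region before time $T$. The bounds on $b_i$ and their derivatives only hold on compacts, so I would run a continuation argument. Let $t^*$ be the first exit time of $x$ from the $\mathrm A/2$-neighbourhood. The estimate shows that up to $t^*$ the error is less than $\mathrm A/2$, so $t^*\ge t_0+T$. Local existence and uniqueness for $x$ and $z$ come from the $C^2$ regularity of the vector fields, so the solutions are defined and unique on the whole interval. Uniformity in $t_0$ holds because every constant depends only on the compact set and on the uniform bounds of the $u_i$.
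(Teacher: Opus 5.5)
Your argument is correct in substance. It follows essentially the proof the paper relies on: the paper does not prove this result itself but restates D\"urr et al.'s Theorem~1. That proof cancels the $\sqrt{\omega}$-terms by integrating the dither against the antiderivatives $U_i$, which is the integral form of your transformation $w=x-\omega^{-1/2}\sum_i b_iU_i$. It then averages the order-one products $u_jU_i$ into the bracket terms and collects the remainders ($R_1$--$R_5$ there) into an $O(\omega^{-1/2})$ term. It closes with a Gronwall estimate of the form $k\,\omega^{-1/2}e^{L(t-t_0)}$ (their (B.9)) and a compactness argument that keeps $x$ where the bounds hold.

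Two things need fixing.

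\textbf{Uniformity in $t_0$.} This does not follow from the bounds on the $u_i$ alone. Your constants $C$ and $L$ involve suprema over $t\in\mathbb{R}$ of $|b_i|$, $|\partial_t b_i|$, $|\partial_x b_i|$, $|\partial_t\partial_x b_i|$, $|\partial_x^2 b_i|$ on compact $x$-sets; you use this when you call the bracket field Lipschitz ``uniformly in $t$''. Plain $C^2$ regularity, as in the restated hypotheses, does not give that. D\"urr et al.\ assume these uniform bounds explicitly, and without them the claim is false. Take $n=m=1$, $b_0=0$, $b_1(t,x)=\cos(t^2)$, $u_1=\cos\theta$, and $\mathcal{B}=\mathcal{K}=[-1,1]$. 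The Lie-bracket system is $\dot z=0$. Yet for $t_0=\omega/2-1$, the substitution $\tau=\omega/2+s$ gives $x(t_0+2)-x_0=\tfrac{\sqrt{\omega}}{2}\int_{-1}^{1}\cos(s^2-\omega^2/4)\,ds+O(\omega^{-1/2})$. This is of order $\sqrt{\omega}$ for arbitrarily large $\omega$. So state the uniform-bound hypothesis and note where it enters.

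\textbf{Index of $\nu$.} Your average is $\overline{u_jU_i}=\nu_{ji}$; the constant in $U_i$ drops out because $u_j$ has zero mean. Your computation therefore yields $\sum_{i<j}[b_i,b_j]\nu_{ji}$, not $\nu_{ij}$. This agrees with D\"urr et al.\ and with the paper's own use of $[b_1,b_2]\nu_{21}$ in Section~\ref{sec:avg2D-scaling}. The index order in \eqref{eq:gen_LBA_system} is a typo, and you should not bend your result to match it.
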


\subsection{GUAS $+$ CTP $\Rightarrow$ SPUAS}
The following result is restated from \cite[Theorem 1]{scheinker2016bounded}, which states that \eqref{eq:gen_form_for_LBA} is SPUAS provided that the CTP holds and that \eqref{eq:gen_LBA_system} is GUAS.
\begin{theorem}
\label{thm:MA-bridge}
Suppose the systems $\dot x = f^\upsilon(t,x)$ and $\dot{\bar x}=f(t,\bar x)$ satisfy the CTP. If $x^*$ is GUAS for $\dot{\bar x}=f(t,\bar x)$, then $x^*$ is $\upsilon$-SPUAS for $\dot x=f^\upsilon(t,x)$ for all $x(t_0)\in\mathbb{R}^n$, and $t_0\in\mathbb{R}$.
\end{theorem}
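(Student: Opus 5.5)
This is a restated result from \cite[Theorem 1]{scheinker2016bounded}, so the plan reproduces the standard ``closeness of trajectories plus uniform stability'' argument of \cite{moreau2002practical}. Fix $\Delta>0$ (size of the set of initial conditions) and $\epsilon>0$ (the target residual). The goal is to find $\upsilon^*$ such that for $0<\upsilon<\upsilon^*$, every solution of $\dot x=f^\upsilon(t,x)$ with $\|x(t_0)-x^*\|\le\Delta$ satisfies
\begin{equation}
\|x(t)-x^*\|\le\beta(\|x(t_0)-x^*\|,t-t_0)+\epsilon .
\end{equation}
This is the semi-global practical form in Definition~\ref{def:SPUAS}, with $\sigma(\upsilon)$ chosen as a $\mathcal{K}_\infty$ bound on the achievable $\epsilon$ as a function of $\upsilon$.

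\textbf{Step 1: choose $\beta$ and the time horizon.} Let $\bar\beta\in\mathcal{KL}$ be the GUAS bound of the averaged system from Definition~\ref{def:GUAS}. Take $\beta:=2\bar\beta$ so that there is slack. Choose a small radius $r\in(0,\epsilon)$ with $\bar\beta(r,0)+r<\epsilon$. Then choose $T>0$ with $\bar\beta(\Delta',T)<r/2$, where $\Delta'\ge\Delta$ is a uniform bound explained in Step 3. Let $\mathcal{K}$ be the closed ball of radius $\Delta'$ about $x^*$.

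\textbf{Step 2: invoke the CTP.} Apply Definition~\ref{def:CTP} on $\mathcal{K}$ with this $T$ and with $\delta:=\min\{r/2,\ \text{slack}\}$. This yields $\upsilon^*$ such that, on every window of length $T$ starting in $\mathcal{K}$, $\|\psi^\upsilon-\psi\|<\delta$. Because the CTP is uniform in $t_0$, the same $\upsilon^*$ works on every window.

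\textbf{Step 3: concatenate windows.} On the first window,
\begin{equation}
\|x(t)-x^*\|\le\bar\beta(\|x_0-x^*\|,t-t_0)+\delta\le\beta(\|x_0-x^*\|,t-t_0),
\end{equation}
where the second inequality holds away from small $\|x_0-x^*\|$ and otherwise is absorbed by $\epsilon$. At $t_0+T$ the state lies in the ball of radius $r$. Restart the comparison there: the averaged solution from the new point stays within $\bar\beta(r,0)$, so the true state stays within $\bar\beta(r,0)+\delta<\epsilon$. It also returns to radius $r$ after each window, because $\bar\beta(r,T)+\delta<r$. Induction over windows $[t_0+kT,t_0+(k+1)T]$ then gives the bound for all $t\ge t_0$. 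The radius $\Delta'$ is defined as $\bar\beta(\Delta,0)+1$, so every intermediate state stays in $\mathcal{K}$ and the CTP remains applicable.

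\textbf{Main obstacle.} The delicate point is the bookkeeping in Step 3. One must ensure that the transient estimate over the first window is dominated by a single $\mathcal{KL}$ function independent of $\Delta$. One must also ensure that re-initialization never leaves $\mathcal{K}$. The standard way around this is the one in \cite{teel1998global}: construct $\beta$ uniformly via a converse Lyapunov function $V$ for the GUAS system and use sublevel sets of $V$ in place of balls. The induction then becomes ``$V$ decreases by a fixed amount per window until reaching a small sublevel set, and is invariant thereafter.''

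Finally, letting $\epsilon\downarrow 0$ yields a monotone map $\epsilon\mapsto\upsilon^*(\epsilon)$. Inverting it gives a residual $\sigma(\upsilon)$, which can be taken in $\mathcal{K}_\infty$ by upper-bounding. Since $\Delta$ is arbitrary, this establishes that $x^*$ is $\upsilon$-SPUAS in the sense of Definition~\ref{def:SPUAS}.
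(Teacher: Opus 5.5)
Your window-concatenation argument is the standard Moreau--Aeyels proof. The paper gives no proof of its own and imports the result by citation from \cite{scheinker2016bounded}. Your Step~3 induction closes. With $\bar\beta(\Delta',T)<r/2$, $\delta\le r/2$ and $r\le\Delta'$:
\begin{itemize}
\item every window starting in the $r$-ball ends in it;
\item on each such window the state stays within $\bar\beta(r,0)+\delta<\epsilon$;
\item the first window obeys $\|x(t)-x^*\|\le\bar\beta(\|x_0-x^*\|,t-t_0)+\delta$.
\end{itemize}
Two cleanups. First, take $\beta=\bar\beta$ directly, since the first-window bound is already $\le\bar\beta(\cdot,\cdot)+\epsilon$. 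Your intermediate claim $\bar\beta+\delta\le 2\bar\beta$ fails whenever $\bar\beta(\|x_0-x^*\|,t-t_0)<\delta$, for example near the end of the first window, and it is not needed. Second, the converse-Lyapunov detour in your ``main obstacle'' paragraph is unnecessary. GUAS already makes $\bar\beta$ independent of $\Delta$, and re-initialization stays in $\mathcal{K}$ once you add the harmless requirement $r\le\Delta'$ (e.g.\ $r\le 1$).

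The one real problem is your final sentence. Your $\upsilon^*$, and the residual $\sigma$ obtained by inverting $\epsilon\mapsto\upsilon^*(\epsilon)$, both depend on $\Delta$. Definition~\ref{def:SPUAS} as written demands a single $\upsilon^*$ and a single $\sigma$ valid for every $x(t_0)\in\mathbb{R}^n$. Saying ``$\Delta$ is arbitrary'' does not let you swap those quantifiers, and no argument can.

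A counterexample: take $f(t,x)=-x$ and $f^\upsilon(t,x)=-x+\upsilon x^3$. These satisfy the CTP on every compact set, and the limit system is GUAS with $\bar\beta(s,t)=se^{-t}$. Yet for every $\upsilon>0$ the solution from $x_0=2/\sqrt{\upsilon}$ satisfies $\dot x\ge\tfrac34\upsilon x^3$ and escapes in finite time.

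So what you have proved, and what \cite{moreau2002practical} proves, is the genuinely semi-global statement. For every $\Delta>0$ and $\epsilon>0$ there is $\upsilon^*(\Delta,\epsilon)>0$ such that $\|x(t)-x^*\|\le\bar\beta(\|x(t_0)-x^*\|,t-t_0)+\epsilon$ for all $t\ge t_0$, all $t_0\in\mathbb{R}$, all $\|x(t_0)-x^*\|\le\Delta$, and all $0<\upsilon<\upsilon^*$. State that as your conclusion. Note explicitly that Definition~\ref{def:SPUAS} has to be read with $\upsilon^*$ and $\sigma$ depending on the compact set of initial conditions.
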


\section{nES Dynamics and Assumptions}

We present the $n=2$ nested case of the nES algorithm, for which a block diagram is shown in Fig.~\ref{fig:nes-diag}, and the dynamics are given by 
\begin{align}
    \dot{x}_1 &= \sqrt{\alpha_1\omega_1}\cos\left(\omega_1 t + k_1 J_{\text{L}}(x_1, x_2)\right), \label{eq:x1}\\
    \dot{x}_2 &= \sqrt{\alpha_2\omega_2}\cos\left(\omega_2 t + k_2 J_{\text{F}}(x_1, x_2)\right), \label{eq:x2}
\end{align}
with initial condition $\left(x_1(t_0),x_2(t_0)\right)=\left(\xi_0,\eta_0\right)$, and design parameters $\alpha_i,k_i>0$, and $\omega_i>0$, for $i\in\{1,2\}$; where $\omega_i=\omega\hat{\omega}_i$ such that $\hat{\omega}_i\neq\hat{\omega}_j$ for $i\neq j$. System \eqref{eq:x1}--\eqref{eq:x2} is the bounded form of ES that was created in \cite{scheinker2013model} and studied for a wide range of systems in \cite{scheinker2014extremum}. We will occasionally refer to $\varepsilon >0$, which is useful in our singular perturbation analysis and is defined as $\varepsilon \coloneqq {1}/{(\alpha_2 k_2)}$.

\begin{figure}[t]
\centering
\begin{tikzpicture}[
    node/.style={draw, rounded corners, align=left, font=\footnotesize, inner sep=6pt, text width=.9\columnwidth},
    lab/.style={font=\scriptsize, align=left},
    >=Latex
]
\node[node] (S0) {%
\textbf{Original system}
\begin{align*}
\dot x_1 &= \sqrt{\alpha_1\omega_1} \cos \big(\omega_1 t+k_1 J_{\text{L}}(x_1,x_2)\big)\\
\dot x_2 &= \sqrt{\alpha_2\omega_2} \cos \big(\omega_2 t+k_2 J_{\text{F}}(x_1,x_2)\big)
\end{align*}
};
\node[node, below=10mm of S0] (S1) {%
\textbf{Partially--Averaged System}
\begin{align*}
\dot{\hat x}_1 &= \sqrt{\alpha_1\omega_1} \cos \big(\omega_1 t+k_1 J_{\text{L}}(\hat x_1, \hat x_2)\big)\\
\dot{\hat x}_2 &= - \frac{\alpha_2 k_2}{2} \partial_{x_2} J_{\text{F}}(\hat x_1, \hat x_2)
\end{align*}
};

\node[node, below=10mm of S1] (S2) {%
\textbf{ROM and QSS} 
\begin{align*}
\dot{\hat x}_1^{\rm{r}} &= \sqrt{\alpha_1\omega_1} \cos \left(\omega_1 t+k_1 J_{\text{L}}\big(\hat x_1^{\rm{r}},h(\hat x_1^{\rm{r}})\big)\right)\\
\hat x_2^{\rm{q}}&=h(\hat x_1^{\rm{r}})+\breve y \left(\frac{t-t_0}{\varepsilon}\right)\approx h(\hat x_1^{\rm{r}})
\end{align*}
};

\node[node, below=10mm of S2] (S3) {%
\textbf{Averaged ROM} 
\begin{align*}
\dot{\bar x}_1^{\rm{r}} &= - \frac{\alpha_1 k_1}{2} \frac{d}{d x_1}\tilde J_{\text{L}}(\bar x_1^{\rm{r}}),
\quad \tilde J_{\text{L}}(\bar x_1^{\rm{r}})=J_{\text{L}}\big(\bar x_1^{\rm{r}},h(\bar x_1^{\rm{r}})\big)
\end{align*}
};

\draw[->, thick] (S0) -- (S1);
\draw[->, thick] (S1) -- (S2);
\draw[->, thick] (S2) -- (S3);

\node[lab, anchor=east] at ($(S0)!0.5!(S1)+(-6mm,0)$) {\large $\approx$};
\node[lab, anchor=east] at ($(S1)!0.5!(S2)+(-6mm,0)$) {\large $\approx$};
\node[lab, anchor=east] at ($(S2)!0.5!(S3)+(-6mm,0)$) {\large $\approx$};

\path (S0) -- (S1) node[midway, right=2mm, lab]{
\textit{Fast Lie--bracket averaging}\\approximated via $\omega_2$};
\path (S1) -- (S2) node[midway, right=2mm, lab] {\textit{Singular perturbation}\\
approximated via $\varepsilon={1}/{\alpha_2k_2}$};
\path (S2) -- (S3) node[midway, right=2mm, lab]{\textit{Slow Lie--bracket averaging}\\approximated via $\omega_1$};
\end{tikzpicture}

\caption{The system approximations used in the analysis, from the original nES dynamics (top) to the averaged ROM (bottom).}
\label{fig:state-xforms}
\end{figure}
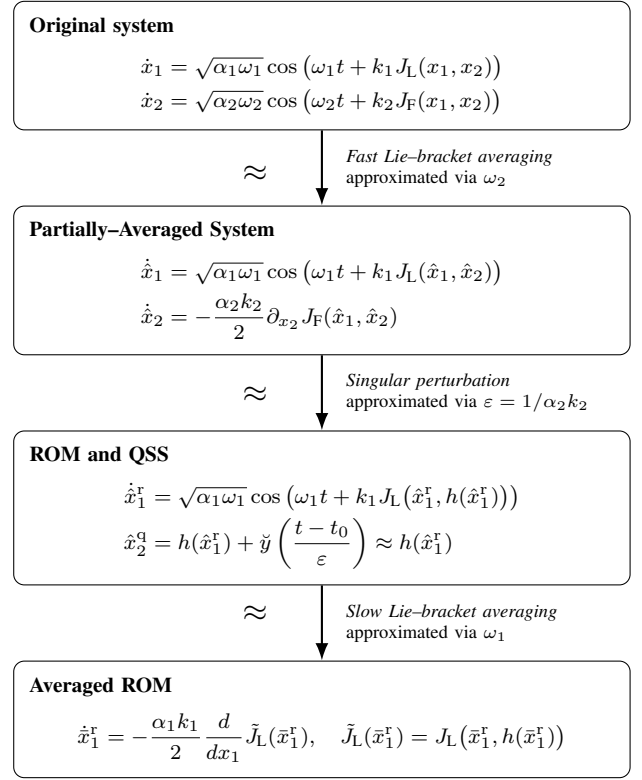

\begin{assumption}\label{asm:A1}
Let $J_{\text{L}}(x_1,x_2)\in C^2:\mathbb{R}\times\mathbb{R}\rightarrow \mathbb{R}$ and $J_{\text{F}}(x_1,x_2)\in C^3:\mathbb{R}\times\mathbb{R}\rightarrow \mathbb{R}$.
\end{assumption}

\begin{assumption}\label{asm:A2}
For each $x_1\in \mathbb{R}$, $\partial_{x_2}J_{\text{F}}(x_1,x_2)=0$ has a unique root $x_2=h(x_1)$. Moreover, $h\in C^1:\mathbb{R}\rightarrow\mathbb{R}$.
\end{assumption}

\begin{assumption}\label{asm:A3}
There exists $m_2>0$ such that
\begin{equation}
\partial_{x_2x_2}^2 J_{\text{F}}(x_1,x_2)\ge m_2,\qquad \forall (x_1,x_2)\in \mathbb{R}^2.
\end{equation}
\end{assumption}
For notational convenience, we define the reduced cost $\tilde{J}_1(x_1):=J_{\text{L}}(x_1,h(x_1))$, noting that
\begin{align}
    \frac{d}{dx_1}\tilde{J}_{\rm{L}}(x_1)=&\frac{d}{d x_1}J_{\text{L}}(x_1,h(x_1))=\frac{\partial}{\partial x_1}J_{\text{L}}(x_1,h(x_1))\notag    \\
    &+h'(x_1)\cdot\frac{\partial}{\partial x_2}J_{\text{L}}(x_1,h(x_1)),
\end{align}
where $h'(x_1)$ follows from the implicit function theorem applied to \(\partial_{x_2}J_{\text{F}}(x_1,h(x_1))=0\):
\begin{equation}\label{eq:hprime}
h'(x_1) = - \frac{\partial_{x_1 x_2}^2J_{\text{F}}\bigl(x_1,h(x_1)\bigr)}
{\partial_{x_2x_2}^2J_{\text{F}}\bigl(x_1,h(x_1)\bigr)}.
\end{equation}

\begin{assumption}\label{asm:A4}
There exists $x_1^*\in\mathbb{R}$ such that $\tilde J_{\text{L}}(x_1^*)=J_{\text{L}}(x_1^*,x_2^*)=0$, where we define $x_2^*:=h(x_1^*)$. Moreover, there exists a constant $ m_1>0$ such that
\begin{equation}
\frac{d^2}{d x_1^2}J_{\text{L}}(x_1,h(x_1))=\frac{d^2}{d x_1^2}\tilde{J}_1(x_1) \geq m_1, \quad \forall x_1\in \mathbb{R}.
\end{equation}
\end{assumption}

\section{System Approximations}\label{sec:system-approxs}
We now present a series of system approximations that will be leveraged in the stability analysis and later provide the practitioner with intuition for how the design parameters must be selected; Fig.~\ref{fig:state-xforms} provides a road map of how the approximations are used. We consider the following system approximations to hold on the finite time interval $t\in[t_0,t_0+T]$ for any $T>0$, and for an arbitrary compact set $\mathcal{K}\in\mathbb{R}^2$. 

\subsection{2D Lie-Bracket Averaging}\label{sec:avg2D-scaling}

Using the definitions found in \cite{durr2013lie}, we proceed to define the partially--averaged Lie--bracket system in the context of the nES dynamics. We can rewrite \eqref{eq:x1}--\eqref{eq:x2} in input--affine form \eqref{eq:gen_form_for_LBA}; let $x:=[x_1,x_2]^\top$ so that 
\begin{equation}\label{eq:avg2D:input-affine}
\dot x = b_0(t,x) + \sum_{i=1}^2 b_i(x)\sqrt{\omega_2} u_i(\omega_2 t),
\end{equation}
where $u_1(\theta)=\cos\theta$, $u_2(\theta)=\sin\theta$, and $\theta=\omega_2 t$ with
\begin{align}
b_0(t,x)=&
\begin{bmatrix}
\sqrt{\alpha_1\omega_1}\cos \big(\omega_1 t+k_1J_{\text{L}}(x)\big)\\
0
\end{bmatrix},\\
b_1(x)=&
\begin{bmatrix}
0\\
\sqrt{\alpha_2}\cos \big(k_2J_{\text{F}}(x)\big)
\end{bmatrix},\\
b_2(x)=&
\begin{bmatrix}
0\\
-\sqrt{\alpha_2}\sin \big(k_2J_{\text{F}}(x)\big)
\end{bmatrix}.
\end{align}
Let $\hat{x}=[\hat x_1,\hat x_2]^\top$. The partially--averaged Lie--bracket system associated with \eqref{eq:avg2D:input-affine} is then obtained by computing $\dot{\hat{x}} =b_0(t,\hat{x};\omega_1) + [b_1,b_2](\hat{x}) \nu_{21}$, which yields 
\begin{align}
\dot{\hat x}_1=&\sqrt{\alpha_1\omega_1}\cos \big(\omega_1 t+k_1J_{\text{L}}(\hat x_1,\hat x_2)\big)\label{eq:avg2D:LB-x1}\\
\dot{\hat x}_2=&-\frac{\alpha_2k_2}{2} \partial_{x_2}J_{\text{F}}(\hat x_1,\hat x_2)\label{eq:avg2D:LB-x2}
\end{align}
with initial condition $\left(\hat x_1(t_0), \hat x_2(t_0)\right)=\left(\xi_0,\eta_0\right)$. 

\begin{proposition}\label{prop:avg2D-scaling}
Consider \emph{fixed} $\alpha_1,k_1>0$, and any $\alpha_2,k_2>0$, $\omega_1>0$; by Theorem~\ref{thm:Dur-Avg}, there exists $\omega_2^*>0$ such that for $\omega_2>\omega_2^*$, the error between the original nES \eqref{eq:x1}--\eqref{eq:x2} and the partially--averaged Lie--bracket system \eqref{eq:avg2D:LB-x1}--\eqref{eq:avg2D:LB-x2} is of order 
\begin{equation}
\|x(t)-\hat{x}(t)\|=\mathcal O \Big(\frac{A(\alpha_2,k_2,\omega_1)}{\sqrt{\omega_2}}\exp \big(L_{\rm av}(t-t_0)\big)\Big),
\end{equation}
for $t\in[t_0,t_0+T]$, where $L_{\rm av}=\mathcal{O}(\sqrt{\omega_1} + \alpha_2 k_2)$, and $A(\alpha_2,k_2,\omega_1):=\max\{\alpha_2^{3/2}k_2^2,\alpha_2k_2\sqrt{\omega_1}\}$.
\end{proposition}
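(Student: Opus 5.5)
The proof has two parts. First, check that the input-affine form \eqref{eq:avg2D:input-affine} meets the hypotheses of Theorem~\ref{thm:Dur-Avg}. Then track, through a standard first-order averaging estimate, how the constants depend on $\alpha_2,k_2,\omega_1$ to get the explicit error order.

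\textbf{Verifying the hypotheses.} Using $\cos(\theta+k_2J_{\rm F})=\cos\theta\cos(k_2J_{\rm F})-\sin\theta\sin(k_2J_{\rm F})$, the right-hand side of \eqref{eq:x2} equals $\sum_i b_i(x)\sqrt{\omega_2}u_i(\omega_2 t)$ with the stated $b_1,b_2$. By Assumption~\ref{asm:A1}, $b_1,b_2$ are $C^2$ in $x$, while $b_0$ is $C^2$ in $x$ and smooth in $t$. The inputs $\cos,\sin$ are Lipschitz, bounded, $2\pi$-periodic and zero-mean. The bracket term gives
\[
[b_1,b_2]=\bigl(0,\;-\alpha_2k_2\,\partial_{x_2}J_{\rm F}(x)\bigr)^\top,
\]
and $\nu_{21}=1/2$, which yields \eqref{eq:avg2D:LB-x2}. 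For boundedness of the averaged solutions on $[t_0,t_0+T]$, use two facts. The $\hat x_1$-dynamics has speed at most $\sqrt{\alpha_1\omega_1}$. The $\hat x_2$-dynamics is a gradient flow that, by Assumptions~\ref{asm:A2}--\ref{asm:A3}, contracts toward $h(\hat x_1)$. Together these confine solutions from $\mathcal{K}$ to a compact set $\mathcal{U}$. Theorem~\ref{thm:Dur-Avg} then gives the existence of $\omega_2^*$.

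\textbf{Quantifying the error.} I would rederive the proof of Theorem~\ref{thm:Dur-Avg} (integration by parts, as in Dürr et al.) with explicit constants. Write $e=x-\hat x$, and let $U_i(\theta)$ be zero-mean primitives of $u_i$. Integrating $\sqrt{\omega_2}\,b_i(x)u_i(\omega_2 s)$ by parts produces three contributions:
\begin{itemize}
\item a boundary term $\frac{1}{\sqrt{\omega_2}}b_i(x)U_i$, of size $\sqrt{\alpha_2}/\sqrt{\omega_2}$;
\item a term $\int \frac{\partial b_i}{\partial x}\dot x\,U_i\,ds$, whose oscillatory part again contains $\sqrt{\omega_2}$;
\item after a second integration by parts, the bracket average, which cancels the averaged drift, plus residuals of order $1/\sqrt{\omega_2}$.
\end{itemize}
These residuals are multiplied by norms of $b_i$, their $x$-derivatives, and $\dot x$ over $\mathcal U$:
\begin{itemize}
\item $\|b_i\|\sim\sqrt{\alpha_2}$;
\item $\|\partial_x b_i\|\sim\sqrt{\alpha_2}k_2$;
\item $\|\partial_x^2 b_i\|\sim\sqrt{\alpha_2}k_2^2$;
\item $\|b_0\|\sim\sqrt{\alpha_1\omega_1}$.
\end{itemize}
The cross terms combine to give the sizes in $A$. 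The terms $\partial_x b_i\,b_j\,\partial_x b_k$-type products are of order $\alpha_2^{3/2}k_2^2$. The terms $\partial_x b_i\,b_0$ are of order $\sqrt{\alpha_2}k_2\sqrt{\omega_1}$, bounded by $\alpha_2k_2\sqrt{\omega_1}$ up to the fixed constants $\alpha_1$ and $\alpha_2\ge$ const, absorbed in $\mathcal O$. The drift $b_0$ must also be differentiated along the averaging, which is how $\omega_1$ enters. This yields
\[
\|e(t)\|\le \frac{cA}{\sqrt{\omega_2}}+\int_{t_0}^t L_{\rm av}\|e(s)\|\,ds .
\]
Here $L_{\rm av}$ is a Lipschitz constant of the averaged field on $\mathcal U$:
\begin{itemize}
\item $\sqrt{\alpha_1\omega_1}k_1\|\partial J_{\rm L}\|$ from \eqref{eq:avg2D:LB-x1};
\item $\tfrac{\alpha_2k_2}{2}\|\partial^2J_{\rm F}\|$ from \eqref{eq:avg2D:LB-x2}.
\end{itemize}
This gives $L_{\rm av}=\mathcal O(\sqrt{\omega_1}+\alpha_2k_2)$, since $\alpha_1,k_1$ are fixed. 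Grönwall's inequality then gives the claimed bound $\mathcal O\bigl(A\,e^{L_{\rm av}(t-t_0)}/\sqrt{\omega_2}\bigr)$.

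\textbf{Main obstacle.} The hard step is the constant bookkeeping in the second integration by parts, with the drift $b_0$ time-varying at rate $\omega_1$. We must check that no term worse than $\max\{\alpha_2^{3/2}k_2^2,\alpha_2k_2\sqrt{\omega_1}\}$ appears, in particular no $\omega_1$ (as opposed to $\sqrt{\omega_1}$) factor. I would handle this by keeping $\partial_t b_0$ out of the integration by parts. Since $b_0$ enters only additively and has no $\omega_2$-oscillation, it only needs to be differentiated in $x$, not in $t$, which keeps its contribution at $\sqrt{\omega_1}$. A separate point is that the compact set $\mathcal U$ must be independent of $\omega_2$, so that the constants are uniform. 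The bound on $e$ together with a continuation argument keeps $x$ in a fixed neighborhood of $\mathcal U$ once $\omega_2$ is large enough.
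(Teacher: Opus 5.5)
Your proposal follows essentially the same route as the paper, whose omitted proof likewise checks the hypotheses of Theorem~\ref{thm:Dur-Avg} for \eqref{eq:avg2D:input-affine} and then retraces the integration-by-parts argument of \cite[Theorem 1]{durr2013lie} with explicit constants: it bounds the remainder terms and closes with Gr\"onwall, so that the constants $k$ and $L$ there become $A(\alpha_2,k_2,\omega_1)$ and $L_{\rm av}$. The one point to tighten, which the paper's sketch also leaves implicit, is uniformity in $\omega_1$: confining $\hat x_1$ only via $|\dot{\hat x}_1|\le\sqrt{\alpha_1\omega_1}$ lets $\mathcal U$ grow like $\sqrt{\alpha_1\omega_1}\,T$ in $x_1$ (and through $h$ also in $x_2$), so the sup-norms of derivatives of $J_{\rm L}$ and $J_{\rm F}$ hidden in your constants become $\omega_1$-dependent (already in the quadratic example this would make $L_{\rm av}$ of order $\omega_1$), and the stated orders of $A$ and $L_{\rm av}$ need an $\omega_1$-uniform a priori bound on the $x_1$-excursion over $[t_0,t_0+T]$.
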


We do not include the proof of the proposition in order to conserve space; we note, however, that the result is derived from the proof of Theorem 1 found in \cite[Appendix B]{durr2013lie}, by simply computing the remainder terms $R_1$--$R_5$ from the cited text, then writing the error given by \cite[(B.9)]{durr2013lie} explicitly in terms of $(\alpha_2,k_2,\omega_1,\omega_2)$, where $k$ and $L$ from the cited text are given by $A(\alpha_2,k_2,\omega_1)$ and $L_{\rm{av}}$, respectively.

\subsection{Singular Perturbation}\label{sec:sp-scaling}

Using the definitions found in \cite[Theorem 11.1]{khalil_nonlinear_2002}, we introduce the necessary systems to define the ROM and QSS in the context of our nES system. 

\paragraph{Standard Singular Perturbation Form}
We are able to rewrite system \eqref{eq:avg2D:LB-x1}--\eqref{eq:avg2D:LB-x2} in standard singular perturbation form:
\begin{align}
\dot{\hat x}_1 = f(t,\hat x_1,\hat x_2):=& \sqrt{\alpha_1\omega_1} \cos(\omega_1 t + k_1 J_{\text{L}}(\hat x_1,\hat x_2)), \label{eq:x-sp-standard}\\
\varepsilon \dot{\hat x}_2 = g(\hat x_1,\hat x_2):=& -\tfrac12 \partial_{x_2}J_{\text{F}}(\hat x_1,\hat x_2),\label{eq:z-sp-standard}
\end{align}
with initial condition $(\hat x_1(t_0), \hat x_2(t_0))=(\xi_0,\eta_0)$, and where we recall that $\varepsilon = 1/(\alpha_2 k_2)$.

\paragraph{State Transformation}
Let $h(\hat x_1)$, referred to as the slow manifold, yield the roots of \eqref{eq:z-sp-standard} when $\varepsilon=0$ such that $\partial_{x_2}J_{\text{F}}(\hat x_1,h(\hat x_1))=0$ for all $\hat x_1$. We introduce the following state transformation $y=\hat x_2-h(\hat x_1)$ and let $y\in \mathcal{K}_y\subset \mathbb{R}$ be a compact set that contains the origin. In the new variables $(\hat x_1,y)$, 
\begin{align}
\dot{\hat x}_1 =& f(t,\hat x_1,y+h(\hat x_1)) \label{eq:state-transform-sys-xdot}\\
\varepsilon \dot y =& g(\hat x_1,y+h(\hat x_1))-\varepsilon h'(\hat x_1) f \big(t,\hat x_1, y+h(\hat x_1)\big) \label{eq:state-transform-sys-ydot}
\end{align}
with initial conditions $\hat x_1(t_0)=\xi_0$, and $y(t_0)=\eta_0-h(\xi_0)=:y_0$.

\paragraph{Boundary-Layer Model}
Let $\tau=(t-t_0)/\varepsilon$ and freeze the slow variable $\hat x_1$ by setting $\varepsilon=0$; the BLM is then given by
\begin{align} \label{eq:blm-sp-tf}
\frac{dy}{d\tau}=& g \big(\hat x_1, y+h(\hat x_1)\big)=  -\tfrac12 \partial_{x_2}J_{\text{F}}\big(\hat x_1, y+h(\hat x_1)\big).
\end{align}

\paragraph{The Reduced Order Model and Quasi--Steady State}
For $\varepsilon=0$, the ROM and QSS are given by 
\begin{align}
\dot{\hat x}_1^{\rm{r}}=&\sqrt{\alpha_1\omega_1}\cos\Big(\omega_1 t+k_1J_{\text{L}}\big(\hat x_1^{\rm{r}},h(\hat x_1^{\rm{r}})\big)\Big),\label{eq:x1-red}\\
\hat x_2^{\rm{q}}=&h(\hat x_1^{\rm{r}}(t))+\breve y \left(\frac{t-t_0}{\varepsilon}\right)\approx h(\hat x_1^{\rm{r}}(t)),\label{eq:x2-qss}
\end{align}
with initial condition $(\hat x_1^{\rm{r}}(t_0), \breve y(t_0))=(\xi_0, y_0)$, and where $\breve y(\tau)$ denotes the solution of the BLM \eqref{eq:blm-sp-tf}. We note that $\hat x_2^{\mathrm q}\approx h(\hat x_1^\mathrm{r}(t))$ because the BLM is exponentially stable to the origin, $y=0$, via Assumption~\ref{asm:A3} and application of Lemma~\ref{lem:scalar-exp-stable}.

\begin{proposition}\label{prop:sp-scaling}
Let Assumptions~\ref{asm:A1}--\ref{asm:A3} hold. Consider \eqref{eq:avg2D:LB-x1}--\eqref{eq:avg2D:LB-x2} written in standard singular perturbation form \eqref{eq:x-sp-standard}-\eqref{eq:z-sp-standard}, and the reduced problem \eqref{eq:x1-red}--\eqref{eq:x2-qss}. For \emph{fixed} $\alpha_1,k_1>0$, and for any $\omega_1>0$, there exists $\varepsilon^*>0$ such that for $0<\varepsilon<\varepsilon^*$ the slow state error is of order
\begin{align}
|\hat x_1(t)-\hat x_1^{\rm{r}}(t)|
=&\mathcal O \Big(\varepsilon e^{c\sqrt{\omega_1}}\Big),
\end{align}
and the fast state error, similarly, is of order
\begin{align}
&|\hat x_2(t)-\hat x_2^{\rm{q}}(t)|=\mathcal O \Big(\varepsilon e^{c\sqrt{\omega_1}}\Big),
\end{align}
for $t\in[t_0,t_0+T]$, where $c=L_{6,0}(t-t_0)$.
\end{proposition}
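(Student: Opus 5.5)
The plan is to invoke Tikhonov's theorem on the finite interval, \cite[Theorem 11.1]{khalil_nonlinear_2002}, for the system \eqref{eq:state-transform-sys-xdot}--\eqref{eq:state-transform-sys-ydot}. I would verify its hypotheses in turn and then track the constants explicitly enough to expose their dependence on $\omega_1$.

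\textbf{Hypotheses.}
\begin{itemize}
\item \emph{Smoothness.} The functions $f$ and $g$, and their first partial derivatives with respect to $(\hat x_1,\hat x_2,\varepsilon)$, must be continuous. This follows from Assumption~\ref{asm:A1}. Also $\partial g/\partial \hat x_2 = -\tfrac12\partial^2_{x_2x_2}J_{\text{F}}$ must have continuous first derivatives, which uses $J_{\text{F}}\in C^3$.
\item \emph{Slow manifold.} The root $h$ is isolated and $C^1$, by Assumption~\ref{asm:A2}.
\item \emph{Boundary layer.} The BLM \eqref{eq:blm-sp-tf} must be exponentially stable uniformly in the frozen $\hat x_1$. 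For each frozen $\hat x_1$, the map $q(y)=\tfrac12\partial_{x_2}J_{\text{F}}(\hat x_1,y+h(\hat x_1))$ has $q'(y)\ge m_2/2$ by Assumption~\ref{asm:A3}, and $q(0)=0$. Lemma~\ref{lem:scalar-exp-stable} with $c=1$ then gives $|\breve y(\tau)|\le |y_0|e^{-m_2\tau/2}$, uniformly in $\hat x_1$, globally in $y$. In particular every $y_0\in\mathcal K_y$ lies in the region of attraction.
\item \emph{Reduced model.} The ROM \eqref{eq:x1-red} must have a unique solution on $[t_0,t_0+T]$ that stays in a compact set. Its right-hand side is bounded by $\sqrt{\alpha_1\omega_1}$, so $|\hat x_1^{\rm r}(t)-\xi_0|\le \sqrt{\alpha_1\omega_1}\,T$. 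Its Lipschitz constant is finite on compacts because $J_{\text{L}}$ is $C^2$ and $h$ is $C^1$.
\end{itemize}
With these in place, Tikhonov yields $\varepsilon^*$ and the $\mathcal O(\varepsilon)$ estimates $\hat x_1-\hat x_1^{\rm r}=\mathcal O(\varepsilon)$ and $\hat x_2-h(\hat x_1^{\rm r})-\breve y(\tau)=\mathcal O(\varepsilon)$, which is exactly the form of $\hat x_2^{\rm q}$.

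\textbf{Dependence on $\omega_1$.} To recover the factor $e^{c\sqrt{\omega_1}}$, I would unpack the Gronwall step in the proof of Tikhonov's theorem instead of citing it as a black box.
\begin{itemize}
\item Write $e_1=\hat x_1-\hat x_1^{\rm r}$. Then
\[
\dot e_1 = f(t,\hat x_1,y+h(\hat x_1))-f(t,\hat x_1^{\rm r},h(\hat x_1^{\rm r})).
\]
\item Split this as a Lipschitz term in $e_1$ plus a term driven by $y$.
\item The Lipschitz constant of $f$ in $(\hat x_1,\hat x_2)$ on the relevant compact set is $\sqrt{\alpha_1\omega_1}\,k_1\sup|\nabla J_{\text{L}}|(1+\sup|h'|)=:L_{6,0}\sqrt{\omega_1}$, where $L_{6,0}$ absorbs the fixed $\alpha_1,k_1$.
\item The $y$-driving term splits into two parts. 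One is the boundary-layer transient, whose integral is $\mathcal O(\varepsilon)$ because $\int e^{-m_2(t-t_0)/(2\varepsilon)}dt=\mathcal O(\varepsilon)$. The other is the $\mathcal O(\varepsilon)$ deviation of $y$ from $\breve y$.
\item The deviation $y-\breve y$ comes from the term $-\varepsilon h' f$ in \eqref{eq:state-transform-sys-ydot} and from the drift of $\hat x_1$. Both are handled with the Lyapunov function $V=\tfrac12 y^2$ and the uniform decay rate $m_2/2$.
\item Gronwall then gives $|e_1(t)|\le C\varepsilon\, e^{L_{6,0}\sqrt{\omega_1}(t-t_0)}$, i.e.\ $\mathcal O(\varepsilon e^{c\sqrt{\omega_1}})$ with $c=L_{6,0}(t-t_0)$.
\item The fast-state bound follows by the triangle inequality:
\[
|\hat x_2-\hat x_2^{\rm q}|\le |y-\breve y|+|h(\hat x_1)-h(\hat x_1^{\rm r})|,
\]
with the second term at most $\sup|h'|\,|e_1|$.
\end{itemize}

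\textbf{Main obstacle.} The hardest step is making the compact sets, and hence the constants, independent of $\omega_1$ while still allowing arbitrary $\omega_1$. The ROM's excursion grows like $\sqrt{\omega_1}T$, so the suprema of $\nabla J_{\text{L}}$ and $h'$ in principle depend on $\omega_1$. My first attempt would be to fix, for the given $\omega_1$ and $T$, the compact set $\mathcal K\times[-\sqrt{\alpha_1\omega_1}T,\sqrt{\alpha_1\omega_1}T]$ and accept that $L_{6,0}$ is defined on it. The $\omega_1$-dependence would then be stated as the explicit $\sqrt{\omega_1}$ in the exponent, with the remaining constants treated as bounded on this set. Correspondingly, $\varepsilon^*$ is allowed to depend on $\omega_1$; this is consistent with the proposition's ``for any $\omega_1>0$, there exists $\varepsilon^*$.''
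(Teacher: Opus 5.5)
Your route is the same as the paper's. You verify the hypotheses of \cite[Theorem 11.1]{khalil_nonlinear_2002} through Assumptions~\ref{asm:A1}--\ref{asm:A3} and Lemma~\ref{lem:scalar-exp-stable}, then read the $\omega_1$-dependence off the Gronwall step with $L_6=\sqrt{\omega_1}L_{6,0}$, and your hypothesis checks are correct. The gap is at the step you flag as the main obstacle: your fallback concedes it rather than resolving it. If the compact set is $\xi_0+[-\sqrt{\alpha_1\omega_1}T,\sqrt{\alpha_1\omega_1}T]$, then $L_{6,0}$ depends on $\omega_1$. The factor $e^{c\sqrt{\omega_1}}$ then no longer records the $\omega_1$-dependence, and that dependence is the only content of the proposition beyond plain Tikhonov, since for each fixed $\omega_1$ the bound is just $\mathcal O(\varepsilon)$. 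In the paper's own quadratic example, $\tilde J_{\text{L}}'(x_1)=9x_1-3$ is of size $\sqrt{\omega_1}$ on that interval. Your Lipschitz constant is therefore of order $\omega_1$, and Gronwall gives $e^{C\omega_1 T}$. With $\omega_1\sim C_1/\upsilon^2$ this is $e^{C'/\upsilon^2}$, which is incompatible with the choice $\varepsilon^*=C_2e^{-2c/\upsilon}$ in the proof of Theorem~\ref{thm:Avg-SP-Stack}. The paper's sketch simply treats $L_{6,0}$ as a constant on a fixed compact set. To deliver the stated bound, you must show that the trajectories stay in a compact set independent of $\omega_1$.

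Such a set exists, but the crude excursion bound only helps for bounded $\omega_1$. For $\omega_1\le\bar\omega_1$ it gives the fixed excursion $\sqrt{\alpha_1\bar\omega_1}\,T$. Take $\bar\omega_1$ to be the threshold of Theorem~\ref{thm:Dur-Avg} (as in Proposition~\ref{prop:avg1D-scaling}) for, say, $\delta=1$. Then for $\omega_1>\bar\omega_1$ the ROM stays within $1$ of the averaged flow \eqref{eq:avg:x1-lb}. That flow is a gradient flow, so it remains in the sublevel set $\{x_1:\tilde J_{\text{L}}(x_1)\le\max_{\xi_0}\tilde J_{\text{L}}(\xi_0)\}$, with the maximum over initial conditions in $\mathcal K$. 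This set is compact under Assumption~\ref{asm:A4}, so you need Assumption~\ref{asm:A4} (or some coercivity of $\tilde J_{\text{L}}$) beyond Assumptions~\ref{asm:A1}--\ref{asm:A3}. The union of the two sets is an $\omega_1$-independent compact set $S$. For $\varepsilon<\varepsilon^*(\omega_1)$, the continuation argument inside Tikhonov's proof keeps $\hat x_1$ near $S$. There the suprema of $\nabla J_{\text{L}}$, $h'$ and $\partial^2_{x_2x_2}J_{\text{F}}$ are $\omega_1$-free, so the Lipschitz constant is genuinely $\sqrt{\omega_1}L_{6,0}$.

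A smaller correction: $y-\breve y$ is $\mathcal O(\varepsilon\sqrt{\omega_1})$, not $\mathcal O(\varepsilon)$, because $|\varepsilon h'f|\le\varepsilon\sup|h'|\sqrt{\alpha_1\omega_1}$. The $y$-driving term in $\dot e_1$ carries a further factor $\sqrt{\omega_1}$, so your prefactor $C$ is polynomial in $\omega_1$. This is harmless, since $\omega_1e^{c\sqrt{\omega_1}}\le C'e^{c'\sqrt{\omega_1}}$ for any $c'>c$, but $c$ must be enlarged accordingly.
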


\begin{figure}[t!]
\centering
\includegraphics[width=0.9\linewidth]{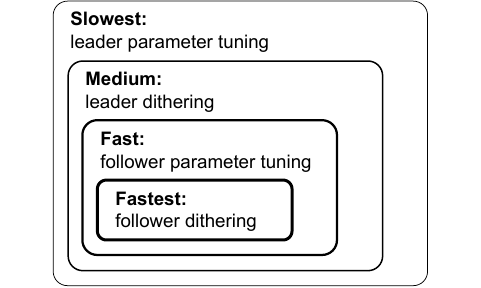}
\caption{Hierarchical nES time-scale diagram for the $n=2$ nested case.} 
\label{fig:timescale-diag}
\end{figure}

We do not include the proof of the proposition; we note, however, that the result is derived from the proof of Theorem 11.1 found in \cite{khalil_nonlinear_2002}. We satisfy the requirements of Theorem 11.1 via Assumptions~\ref{asm:A1}--\ref{asm:A3} and the use of Lemma~\ref{lem:scalar-exp-stable}, which certifies that the equilibrium $y=0$ of \eqref{eq:blm-sp-tf} is exponentially stable uniformly in $\hat x_1$ \cite[Definition 11.1]{khalil_nonlinear_2002}. We compute the error estimates given by \cite[(11.20)--(11.21)]{khalil_nonlinear_2002} explicitly in terms of $(\varepsilon,\omega_1)$, where in the cited text $L_6=\sqrt{\omega_1}L_{6,0}$.

\subsection{1D Lie--Bracket Averaging}\label{sec:avg1D-scaling}

We again work with the definitions found in \cite{durr2013lie} to define the averaged Lie--bracket reduced--order system in the context of our nES system. We rewrite the ROM as
\begin{align}\label{eq:avg:osc}
\dot{\hat x}_1^{\rm{r}}=&\sqrt{\omega_1}\Bigl[u_1(t,\theta)b_1(\hat x_1^{\rm{r}})+u_2(t,\theta)b_2(\hat x_1^{\rm{r}})\Bigr],
\end{align}
with initial condition $\hat x_1(t_0)=\xi_0$, where 
\begin{align}
b_1(\hat x_1^{\rm{r}}) &:= \sqrt{\alpha_1} \cos \bigl(k_1\tilde J_{\text{L}}(\hat x_1^{\rm{r}})\bigr), &
u_1(\theta) &:= \cos\theta, \label{eq:ROM-b1u1}    \\
b_2(\hat x_1^{\rm{r}}) &:= - \sqrt{\alpha_1} \sin \bigl(k_1\tilde J_{\text{L}}(\hat x_1^{\rm{r}})\bigr), &
u_2(\theta) &:= \sin\theta, 
\end{align}
where $\theta =\omega_1 t$, and recalling that $\tilde J_{\text{L}}(\hat x_1^{\rm{r}})=J_{\text{L}}(\hat x_1^{\rm{r}},h(\hat x_1^{\rm{r}}))$. The associated Lie--bracket averaged system is
\begin{equation}\label{eq:avg:x1-lb}
\dot{\bar x}_1^{\rm{r}}=[b_1,b_2](\bar x_1^{\rm{r}}) \nu_{21}= -\frac{\alpha_1k_1}{2} \frac{d}{dx_1}\tilde J_{\text{L}}(\bar x_1^{\rm{r}}),
\end{equation}
where $\bar x_1(t_0)=\xi_0$, and by the chain rule,
\begin{equation*}
\frac{d}{dx_1}\tilde J_{\text{L}}(\bar x_1^{\rm{r}})= \partial_{x_1}J_{\text{L}}\bigl(\bar x_1^{\rm{r}},h(\bar x_1^{\rm{r}})\bigr)+ \partial_{x_2}J_{\text{L}}\bigl(\bar x_1^{\rm{r}},h(\bar x_1^{\rm{r}})\bigr) h'(\bar x_1^{\rm{r}}),
\end{equation*}
where $(\cdot)':=\tfrac{d}{dx_1}$. The QSS is then evaluated using $\bar x_1^r$, yielding 
\begin{equation}
    \bar x_2^{\rm{q}}=h(\bar x_1^{\rm{r}}(t))+\breve y \left(\frac{t-t_0}{\varepsilon}\right)\approx h(\bar x_1^{\rm{r}}(t)).\label{eq:x2-bar-qss}
\end{equation}

\begin{proposition}\label{prop:avg1D-scaling}
Consider \emph{fixed} $\alpha_1,k_1>0$. By Theorem~\ref{thm:Dur-Avg}, there exists $\omega_1^*>0$ such that for $\omega_1>\omega_1^*$, the error between the ROM \eqref{eq:avg:osc} and the averaged Lie--bracket ROM \eqref{eq:avg:x1-lb} is of order 
\begin{equation}
|x_1(t)-\bar x_1(t)|
=\mathcal O \left(\frac{1}{\sqrt{\omega_1}}\right),
\end{equation}
for $t\in[t_0,t_0+T]$.
\end{proposition}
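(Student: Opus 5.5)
We verify the hypotheses of Theorem~\ref{thm:Dur-Avg} for the ROM \eqref{eq:avg:osc}, compute the Lie bracket, and then track how the $\mathcal O(1/\sqrt{\omega_1})$ rate arises from the remainder terms of \cite[Appendix B]{durr2013lie}, as was done for Proposition~\ref{prop:avg2D-scaling}.

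\textbf{Step 1: regularity and inputs.} The ROM \eqref{eq:avg:osc} is already in the form \eqref{eq:gen_form_for_LBA}, with $b_0\equiv 0$, $m=2$, and $\sqrt{\omega}=\sqrt{\omega_1}$. By Assumption~\ref{asm:A1}, $J_{\text{L}}\in C^2$ and $J_{\text{F}}\in C^3$. By Assumption~\ref{asm:A2} and \eqref{eq:hprime}, $h\in C^1$; since $J_{\text{F}}\in C^3$ and the denominator in \eqref{eq:hprime} is bounded below by Assumption~\ref{asm:A3}, we in fact get $h'\in C^1$, so $h\in C^2$. Hence $\tilde J_{\text{L}}\in C^2$, and $b_1,b_2$, being compositions with $\cos$ and $\sin$, are $C^2$.

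\textbf{Step 2: the bracket.} The inputs $u_1=\cos\theta$ and $u_2=\sin\theta$ are Lipschitz, bounded, $2\pi$-periodic and zero mean. Direct computation gives
\begin{equation*}
[b_1,b_2]=b_2'b_1-b_1'b_2=-\alpha_1 k_1 \tilde J_{\text{L}}'.
\end{equation*}
This follows from
\begin{equation*}
b_1'=-\sqrt{\alpha_1}k_1\tilde J_{\text{L}}'\sin(k_1\tilde J_{\text{L}}),\qquad b_2'=-\sqrt{\alpha_1}k_1\tilde J_{\text{L}}'\cos(k_1\tilde J_{\text{L}}),
\end{equation*}
together with $\cos^2+\sin^2=1$. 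For the weight, we have $\nu_{21}=\frac{1}{2\pi}\int_0^{2\pi}\sin\theta\,\sin\theta\,d\theta=\tfrac12$, which yields \eqref{eq:avg:x1-lb}.

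\textbf{Step 3: bounded solutions.} We need a set $\mathcal B\supseteq\mathcal K$ on which \eqref{eq:avg:x1-lb} has unique, uniformly bounded solutions. By Assumption~\ref{asm:A4}, $q=\tilde J_{\text{L}}'$ is $m_1$-strongly monotone. Assumption~\ref{asm:A4} only gives the value $\tilde J_{\text{L}}(x_1^*)=0$, so we should interpret $x_1^*$ as the root of $\tilde J_{\text{L}}'$; strong convexity guarantees that a unique minimizer exists. Lemma~\ref{lem:scalar-exp-stable} with $c=\alpha_1k_1/2$ then gives
\begin{equation*}
|\bar x_1^{\rm r}(t)-x_1^*|\le|\xi_0-x_1^*|.
\end{equation*}
So $\mathcal B=\mathbb R$ works: trajectories stay in a ball around $\mathcal K$ of any fixed radius $\mathrm A$. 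Theorem~\ref{thm:Dur-Avg} then yields $\omega_1^*$ and the CTP on $[t_0,t_0+T]$.

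\textbf{Step 4: the rate (main obstacle).} Theorem~\ref{thm:Dur-Avg} only states $|x-z|<\delta$, so the $\mathcal O(1/\sqrt{\omega_1})$ rate requires opening the proof in \cite[Appendix B]{durr2013lie}. There, the solution is written as $x=z+$ terms. These terms are integrals of $U_i(\theta)=\int_0^\theta u_i$ multiplied by $\sqrt{\omega_1}/\omega_1=1/\sqrt{\omega_1}$, together with second-order remainders of size $1/\omega_1$ and $1/\sqrt{\omega_1}$. Each term is bounded by sup-norms of $b_i$, $b_i'$ and $b_i''$ over the compact ball $\mathcal U_{\mathrm A}$. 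These sup-norms depend only on $\alpha_1$, $k_1$, and bounds on $\tilde J_{\text{L}}$ and its derivatives on that ball, and none of them involve $\omega_1$ because $b_0\equiv0$ and the $b_i$ are time invariant. A Gronwall step with Lipschitz constant $L=\alpha_1k_1\sup|\tilde J_{\text{L}}''|/2$, also independent of $\omega_1$, gives
\begin{equation*}
|\hat x_1^{\rm r}-\bar x_1^{\rm r}|\le C(\alpha_1,k_1,\mathcal K)\,e^{LT}/\sqrt{\omega_1}.
\end{equation*}
The delicate part is checking that no remainder carries a hidden positive power of $\omega_1$. The cancellation of the $\sqrt{\omega_1}$ growth occurs through integration by parts against the periodic $U_i$, and verifying it is where the care is needed.
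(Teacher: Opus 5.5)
Your proposal is correct and follows the route the paper indicates. The paper invokes \cite[Theorem 1]{durr2013lie}, notes that the ROM is driftless ($b_0\equiv 0$) so only the remainder terms $R_2$--$R_5$ of \cite[Appendix B]{durr2013lie} appear, and reads the $\mathcal O(1/\sqrt{\omega_1})$ rate off \cite[(B.9)]{durr2013lie}; your checks of $h\in C^2$, of $[b_1,b_2]\nu_{21}=-\tfrac{\alpha_1k_1}{2}\tilde J_{\text{L}}'$, and of uniform boundedness via Lemma~\ref{lem:scalar-exp-stable} supply details the paper leaves implicit. One small cleanup: instead of $\mathcal B=\mathbb R$, take $\mathcal B$ to be the forward-invariant compact interval $\{x_1:|x_1-x_1^*|\le\max_{\xi\in\mathcal K}|\xi-x_1^*|\}$, so that the sup-norm and Lipschitz bounds in your Step 4 are actually taken over a compact set.
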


We do not include the proof of the proposition. We note, however, that the result is again derived by following the proof of Theorem 1 found in \cite[Appendix B]{durr2013lie}. Since the system is driftless, one simply computes the remainder terms $R_2$--$R_5$ from the cited text and then writes the error given by \cite[(B.9)]{durr2013lie} explicitly in terms of $\omega_1$.

\section{Main Result}
We first explain key aspects of the algorithm before presenting the main theorem; Fig.~\ref{fig:timescale-diag} shows the hierarchical structure of the \textbf{four timescales} present in the \(n=2\) case, which are most naturally understood from fastest to slowest: (1) The follower's dithering frequency must be large enough relative to the follower's parameter-tuning rate to induce partial-gradient follower dynamics. (2) On the fast adaptation timescale, the follower's parameter performs partial-gradient descent with respect to its own cost, \(\partial_{x_2}J_{\text{F}}(x_1,x_2)\), thereby producing the follower response to the leader's current decision. (3) The leader's dithering frequency must be chosen large enough so that, from the leader's perspective, the follower dynamics are effectively algebraic and the follower's response is represented by the static map \(x_2=h(x_1)\). (4) On the slowest timescale, the leader updates its parameter using the follower's induced response, and the dynamics reflect descent along \(\frac{d}{dx_1}J_{\text{L}}(x_1,h(x_1))\). Therefore, the nES design parameters must be chosen to preserve the four-timescale hierarchy so as to induce the Stackelberg leader--follower behavior of the algorithm. 

We now present the main theoretical result of the paper for the $n=2$ nested case of the nES dynamics. 
\begin{theorem}\label{thm:Avg-SP-Stack}
Under Assumptions~\ref{asm:A1}--\ref{asm:A4}, consider the nES system \eqref{eq:x1}--\eqref{eq:x2} with $k_i,\alpha_i>0$ and $\omega_i>0$ for $i\in\{1,2\}$. There exists $\upsilon^*>0$ such that for every $0<\upsilon<\upsilon^*$ there exist thresholds $\omega_1^*(\upsilon)$, $\varepsilon^*(\upsilon,\omega_1)$, and $\omega_2^*(\upsilon,\omega_1,\varepsilon)$ so that for $\omega_1>\omega_1^*$, $0<\varepsilon<\varepsilon^*$, and $\omega_2>\omega_2^*$, \eqref{eq:x1}--\eqref{eq:x2} is $\upsilon$-SPUAS to the point $(x_1^*,x_2^*)=(x_1^*,h(x_1^*))$.
\end{theorem}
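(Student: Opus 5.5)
The plan is to chain the three approximations of Section~\ref{sec:system-approxs} into a single CTP statement between the original nES system \eqref{eq:x1}--\eqref{eq:x2} and a limiting system that is GUAS. Theorem~\ref{thm:MA-bridge} then converts that CTP into $\upsilon$-SPUAS. The limiting system will be the averaged ROM \eqref{eq:avg:x1-lb} for $x_1$, augmented by the fast variable sitting on the slow manifold via the QSS \eqref{eq:x2-bar-qss}. The thresholds are fixed in the order stated in the theorem, from slowest to fastest: first $\omega_1$, then $\varepsilon$, then $\omega_2$. This order is forced, because each error bound in Propositions~\ref{prop:avg2D-scaling}--\ref{prop:avg1D-scaling} depends on the parameters chosen before it (through $e^{c\sqrt{\omega_1}}$ and $A(\alpha_2,k_2,\omega_1)$, $L_{\rm av}$).

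\emph{Step 1: stability of the limiting models.} By Assumption~\ref{asm:A4}, $q(x_1):=\frac{d}{dx_1}\tilde J_{\rm L}(x_1)$ is $m_1$--strongly monotone with $q(x_1^*)=0$. Lemma~\ref{lem:scalar-exp-stable} with $c=\alpha_1k_1/2$ then gives global exponential stability of $x_1^*$ for \eqref{eq:avg:x1-lb}, hence GUAS. Similarly, Assumption~\ref{asm:A3} makes $y\mapsto \tfrac12\partial_{x_2}J_{\rm F}(\hat x_1,y+h(\hat x_1))$ strongly monotone with constant $m_2/2$, uniformly in $\hat x_1$. Lemma~\ref{lem:scalar-exp-stable} therefore gives exponential stability of $y=0$ for the BLM \eqref{eq:blm-sp-tf}, uniformly in $\hat x_1$, so the boundary-layer transient $\breve y$ decays like $e^{-m_2(t-t_0)/(2\varepsilon)}$. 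Continuity of $h$ (Assumption~\ref{asm:A2}) transfers convergence of $\bar x_1^{\rm r}$ to $x_1^*$ into convergence of $h(\bar x_1^{\rm r})$ to $x_2^*$. We thus obtain a $\mathcal{KL}$ bound for the composite reference trajectory $(\bar x_1^{\rm r},\bar x_2^{\rm q})$ toward $(x_1^*,x_2^*)$.

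\emph{Step 2: CTP by the triangle inequality.} Fix a compact $\mathcal K$, $T>0$ and $\delta>0$. Split the error as
\begin{equation*}
\|x-\bar x\|\le \|x-\hat x\|+\|\hat x-(\hat x_1^{\rm r},\hat x_2^{\rm q})\|+\|(\hat x_1^{\rm r},\hat x_2^{\rm q})-(\bar x_1^{\rm r},\bar x_2^{\rm q})\|.
\end{equation*}
The three terms are controlled in turn:
\begin{itemize}
\item Choose $\omega_1^*$ via Proposition~\ref{prop:avg1D-scaling} so the third term is below $\delta/3$. The $x_2$ part of that term follows from the $x_1$ part by a Lipschitz bound for $h$ on the relevant compact set.
\item With $\omega_1$ now fixed, choose $\varepsilon^*(\omega_1)$ via Proposition~\ref{prop:sp-scaling} so that $\varepsilon e^{L_{6,0}T\sqrt{\omega_1}}<\delta/3$.
\item With $\omega_1$ and $\varepsilon$ fixed (so $\alpha_2k_2$ is bounded), $A$ and $L_{\rm av}$ are constants. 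Choose $\omega_2^*$ via Proposition~\ref{prop:avg2D-scaling} so that $A e^{L_{\rm av}T}/\sqrt{\omega_2}<\delta/3$.
\end{itemize}
Each proposition requires trajectories to remain in compact sets. I would secure this by taking the compact sets to be a $\delta$-inflation of the reachable set of the GUAS limiting system from $\mathcal K$ over $[t_0,t_0+T]$, which is bounded by Step 1. This also supplies the uniform-boundedness hypothesis of Theorem~\ref{thm:Dur-Avg}.

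\emph{Step 3: conclusion and main obstacle.} The CTP plus GUAS yields SPUAS through Theorem~\ref{thm:MA-bridge}, after identifying the single small parameter $\upsilon$ with the nested choice $\big(1/\omega_1,\varepsilon,1/\omega_2\big)$ tuned as functions of $\upsilon$.

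The main obstacle is that Theorem~\ref{thm:MA-bridge} is stated for one small parameter and one averaged system. Here the limiting system involves a boundary-layer transient, and the SP approximation only holds with the $\breve y$ correction: the $\approx h(\hat x_1^{\rm r})$ error is not small near $t_0$. I would handle this by working in the coordinates $(x_1,y)$ with limiting dynamics given by the cascade of \eqref{eq:avg:x1-lb} and the BLM. This cascade is GUAS by Step 1 together with a standard cascade argument, since the BLM is exponentially stable uniformly in $x_1$. Semiglobality is preserved because for every compact $\mathcal K$ the thresholds above are finite, and the residual $\sigma(\upsilon)$ is obtained by enforcing $\delta=\delta(\upsilon)$ in the Moreau--Aeyels construction underlying Theorem~\ref{thm:MA-bridge}.
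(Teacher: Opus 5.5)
Your proposal is correct and follows the paper's proof essentially step for step. Both establish the CTP by chaining the 2D Lie--bracket averaging, singular perturbation, and 1D Lie--bracket averaging errors through the triangle inequality, with thresholds fixed in the nested order $\omega_1$, then $\varepsilon$, then $\omega_2$; both get exponential stability of the averaged ROM from Lemma~\ref{lem:scalar-exp-stable} and Assumption~\ref{asm:A4}, and both conclude with Theorem~\ref{thm:MA-bridge}.

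Your bookkeeping is in places tighter than the paper's: the compact-set and boundedness chain, letting $\varepsilon^*$ depend on $\omega_1$ directly rather than reading $\omega_1>C_1/\upsilon^2$ as an upper bound, and treating the $\breve y$ transient through the $(x_1,y)$ cascade. One point to state explicitly is that this cascade depends on $\varepsilon$, so its $\mathcal{KL}$ bound must be uniform in $\varepsilon$; it is, because the boundary-layer rate $m_2/(2\varepsilon)$ only improves as $\varepsilon\downarrow 0$.
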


\begin{proof} 
We break the proof into the following steps: \textbf{Step 1}: Using the system approximations from Section~\ref{sec:system-approxs}, we construct explicit scaling laws for the design parameters $(\alpha_1k_1,\omega_1,\alpha_2k_2,\omega_2)$ in terms of a small parameter $\upsilon>0$. By combining error bounds from Propositions~\ref{prop:avg2D-scaling}--\ref{prop:avg1D-scaling}, we show that all approximation errors vanish as $\upsilon\to0$, which establishes the CTP between the original dynamics and the averaged ROM--QSS pair. \textbf{Step 2}: Under the stated convexity and regularity assumptions, we show that the averaged ROM and QSS subsystem is exponentially stable. \textbf{Step 3}: We combine the CTP result from Step~1 with the exponential stability established in Step~2. Applying Theorem~\ref{thm:MA-bridge}, we conclude that the nES dynamics are $\upsilon$-SPUAS to the point $(x_1^*,x_2^*)$.

\textbf{Step 1:} 
Applying the system approximations presented in Section~\ref{sec:system-approxs}, we approximate the nES \eqref{eq:x1}--\eqref{eq:x2} dynamics $(x_1,x_2)$ with the averaged ROM \eqref{eq:avg:x1-lb} and the QSS \eqref{eq:x2-qss} pair $(\bar x_1^{\rm{r}}, \bar x_2^{\rm{q}})$, uniformly for $t\in[t_0,t_0+T]$. 
We now devise a scaling law for the design parameters $(\omega_1,\varepsilon,\omega_2)$ using Propositions~\ref{prop:avg2D-scaling}--\ref{prop:avg1D-scaling}. 
Let $\upsilon>0$ be a small parameter.

\textbf{1D Averaging Error:}
From Proposition~\ref{prop:avg1D-scaling},
\begin{equation}
|\hat x_1^{\rm{r}}(t)-\bar x_1^{\rm{r}}(t)|
=\mathcal O \left(\frac{1}{\sqrt{\omega_1}}\right).
\end{equation}
To ensure that this error vanishes as $\upsilon\to0$, we impose
\begin{equation}
\omega_1 > \omega_1^*:= \frac{C_1}{\upsilon^2} 
\end{equation}
for some constant $C_1>0$ independent of $\upsilon$ where $\omega_1^*$ grows as $\upsilon \to 0$.

\begin{figure}[t!]
\centering
\includegraphics[width=\columnwidth]{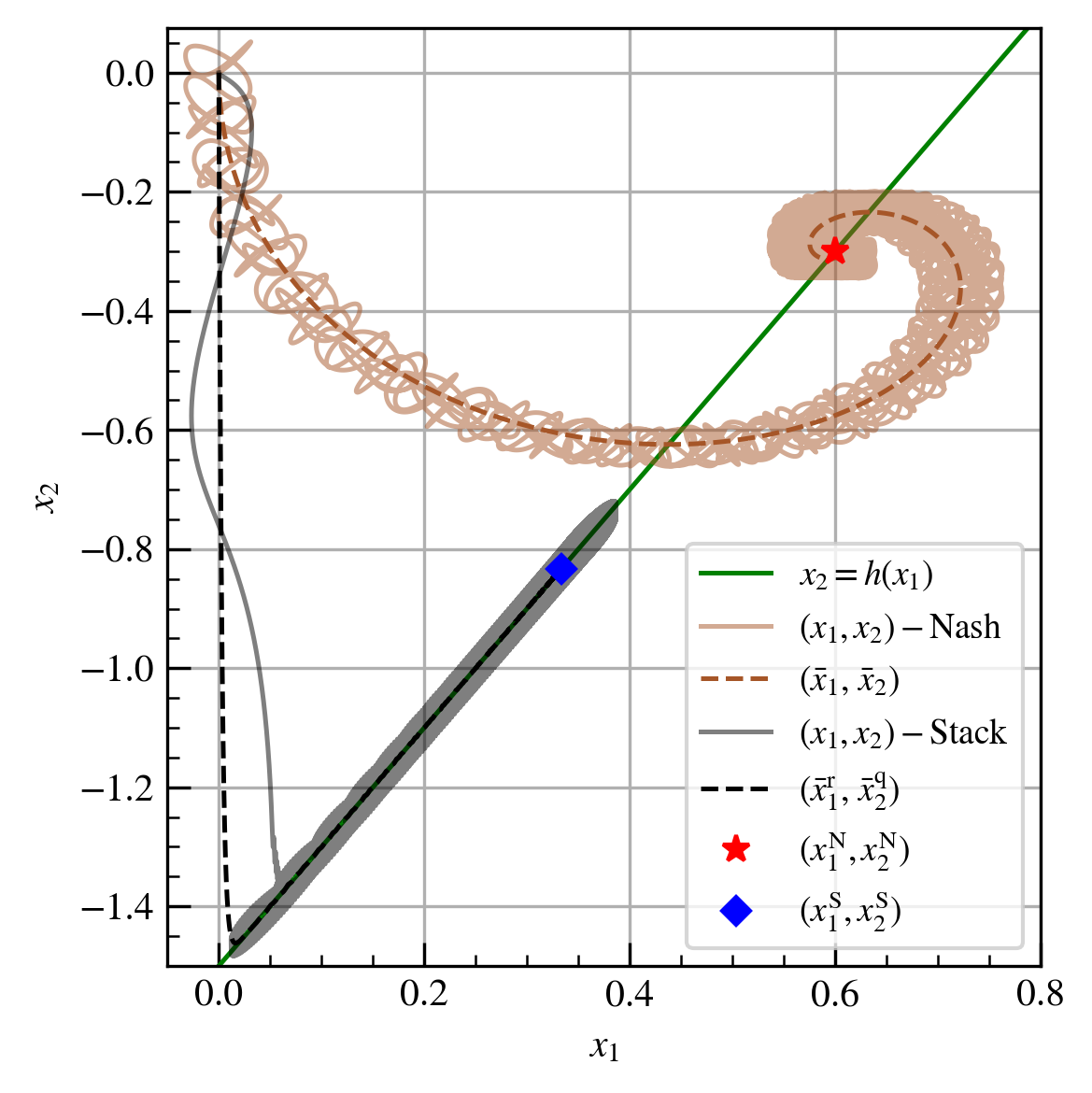}
\caption{Phase space plot for the example discussed in Section \ref{sec:Simple-Example}, comparing the practical convergence to the Nash (brown traces) versus Stackelberg (grey traces) equilibrium under the nES dynamics.} 
\label{fig:nES-example-phase}
\end{figure}

\textbf{Singular Perturbation Error:}
From Proposition~\ref{prop:sp-scaling},
\begin{equation}
|\hat x_1(t)-\hat x_1^{\rm{r}}(t)|
=
\mathcal O \left(\varepsilon e^{c\sqrt{\omega_1}}\right).
\end{equation}
Since $\sqrt{\omega_1}=\mathcal O(1/\upsilon)$ under the scaling law for $\omega_1$, we rewrite the error in terms of $\upsilon$:
\begin{equation}
|\hat x_1(t)-\hat x_1^{\rm{r}}(t)|
=
\mathcal O \left(\varepsilon e^{c/\upsilon}\right).
\end{equation}
Hence we require
\begin{equation}
\varepsilon < \varepsilon^*:= C_2 e^{-2c/\upsilon}
\end{equation}
for some constant $C_2>0$, where $\varepsilon^*$ shrinks as $\upsilon \to 0$. 
Since $\varepsilon={1}/{(\alpha_2 k_2)}$, and for simplicity we consider \emph{fixed} $\alpha_2>0$, implying 
\begin{equation}
k_2 > k_2^* := \frac{1}{C_2\alpha_2} e^{2c/\upsilon},
\end{equation}
where $k_2$ grows as $\upsilon\to 0$.

\textbf{2D Averaging Error:}
From Proposition~\ref{prop:avg2D-scaling},
\begin{equation}
\|x(t)-\hat{x}(t)\|=\mathcal O \Big(\frac{A(\alpha_2,k_2,\omega_1)}{\sqrt{\omega_2}}\exp \big(L_{\rm av}(t-t_0)\big)\Big).
\end{equation}
Using the above bounds on $\omega_1$ and $k_2$, we obtain
\begin{equation}
A(\alpha_2,k_2,\omega_1)=\mathcal O \left(e^{4c/\upsilon}\right), \quad L_{\rm av}=\mathcal O \left(e^{2c/\upsilon}\right).
\end{equation}
Hence, for $t\in[t_0,t_0+T]$,
\begin{equation}
\|x(t)-\hat x(t)\|
=
\mathcal O \left(
\frac{1}{\sqrt{\omega_2}}
\exp \big(e^{2c/\upsilon}\big)
\right).
\end{equation}
Therefore require
\begin{equation}
\omega_2 >
\omega_2^* :=
C_3 \exp \big(2e^{2c/\upsilon}\big),
\end{equation}
for some constant $C_3>0$.

 \begin{figure}[t!]
\centering
\includegraphics[width=\columnwidth]{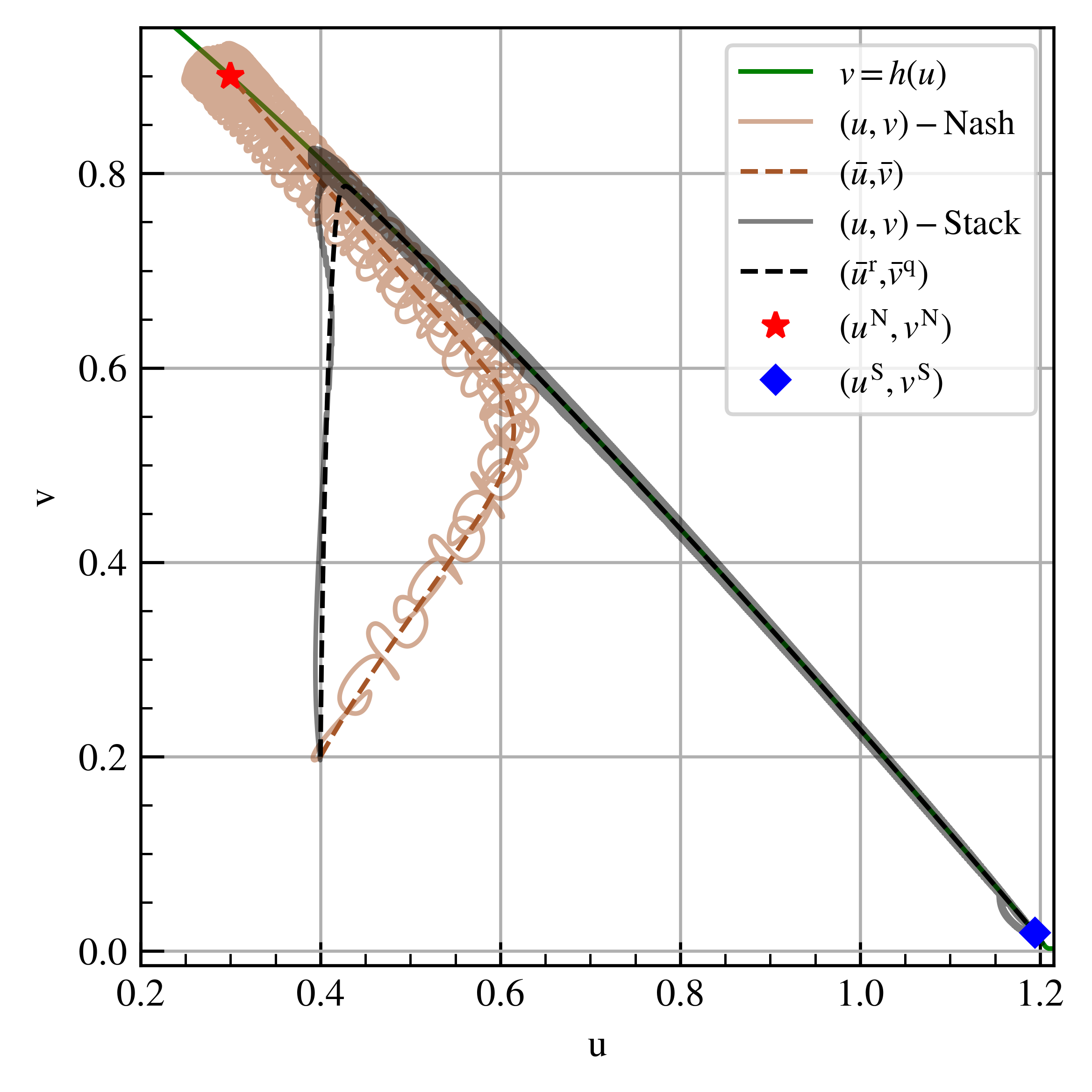}
\caption{Phase space plot for the Fish War example discussed in Section \ref{sec:fishwar}, comparing the practical convergence to Nash (brown traces) versus Stackelberg (grey traces) equilibrium under the nES dynamics.} 
\label{fig:nES-fish-war-phase}
\end{figure}

In summary, for \emph{fixed} $\alpha_1,\alpha_2>0$, $k_1>0$, and under the parameter scaling laws $\omega_1 > \omega_1^*$, $k_2 > k_2^*$, and $\omega_2 > \omega_2^*$,
where 
\begin{align}
    \omega_1^*= \frac{C_1}{\upsilon^2},\quad k_2^*=\frac{e^{2c/\upsilon}}{C_2\alpha_2},\quad \omega_2^* = C_3 \exp \big(2 e^{2c/\upsilon}\big),
\end{align}
all of the approximation errors vanish as $\upsilon\to0$. Therefore, for any $\delta>0$ there exists $\upsilon^*>0$ such that for $0<\upsilon<\upsilon^*$, by the triangle inequality, 
\begin{equation}
    |x_1(t)-\bar x_1^{\rm{r}}(t)|<\delta,\qquad|x_2(t)-\hat x_2^{\rm{q}}(t)|<\delta,
\end{equation}
which satisfies the definition of the CTP given by Definition~\ref{def:CTP}.

\textbf{Step 2:} 
By Assumption~\ref{asm:A4}, $\tilde J_{\text{L}}(x_1)$ is $ m_1$-strongly convex for all $x_1$ and hence by Lemma~\ref{lem:scalar-exp-stable} the averaged ROM \eqref{eq:avg:x1-lb} is exponentially stable. 

\textbf{Step 3}: By combining the CTP result from Step 1 with the exponential stability of the ROM from Step 2, Theorem \ref{thm:MA-bridge} implies that the point $(x_1^*,x_2^*)$ of the original nES dynamics is $\upsilon$-SPUAS.
\end{proof}

By Theorem~\ref{thm:Avg-SP-Stack}, the nES dynamics are $\upsilon$-SPUAS to the point 
\(
(x_1^*,x_2^*)=(x_1^*,h(x_1^*)).
\)
Under the assumptions, this point is in fact a strict SE with $x_1$ as the leader and $x_2$ as the follower. Indeed, Assumption~\ref{asm:A3} guarantees that for each $x_1$ the follower’s problem 
\(
\min_{x_2} J_{\text{F}}(x_1,x_2)
\)
is strongly convex in $x_2$, so that the best-response map $h(x_1)$ is single-valued and locally minimizing. Assumption~\ref{asm:A4} ensures that the reduced cost 
\(
\tilde J_{\text{L}}(x_1)=J_{\text{L}}(x_1,h(x_1))
\)
is strongly convex, so that $x_1^*$ is the unique minimizer. Consequently, $(x_1^*,x_2^*)$ satisfies the SE conditions introduced in \eqref{eq:x^S condition}. 

\begin{figure*}[t!]
\centering
\includegraphics[width=\textwidth]{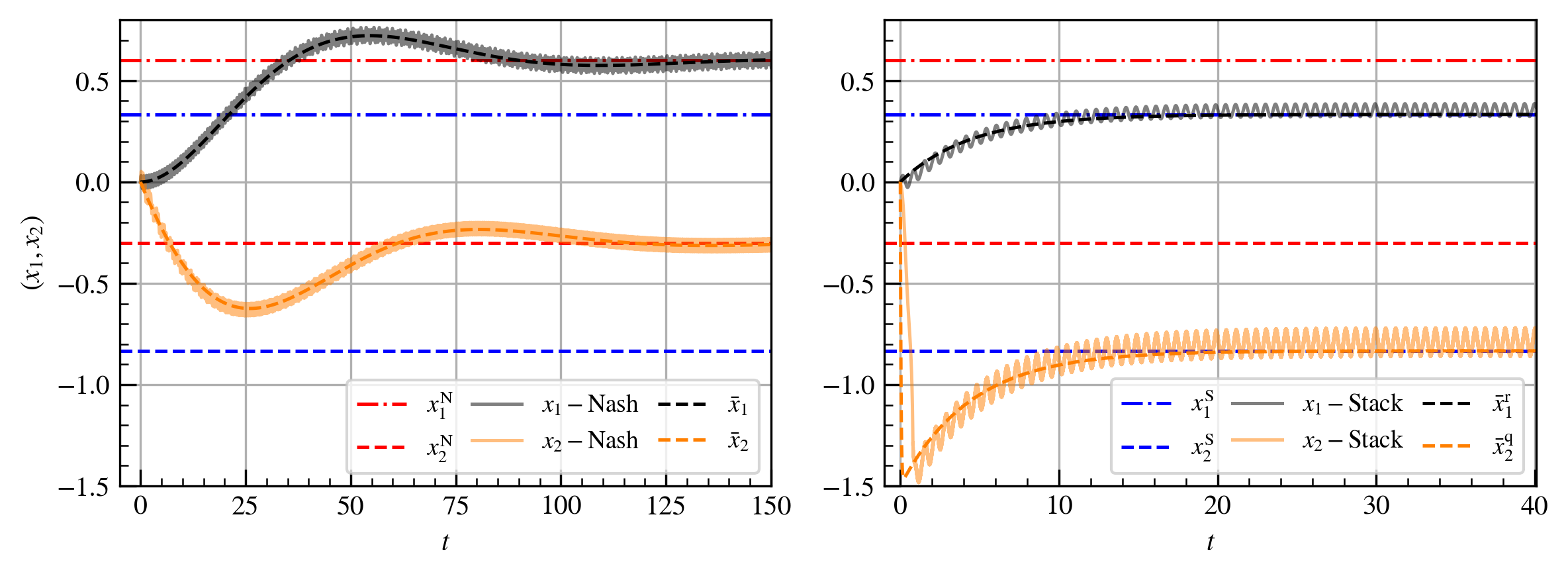}
\caption{Time series plots for the example in Section \ref{sec:Simple-Example}, comparing the practical convergence to the Nash (left) versus Stackelberg (right) equilibria.}
\label{fig:nES-example-time}
\end{figure*}

In a previous study, which also applied nES to a two-player game \cite{Ratto2026ACC}, the same dynamics \eqref{eq:x1}--\eqref{eq:x2} were shown to converge to a neighborhood of the NE. The distinction lies not in the closed-loop dynamics themselves, but in the imposed scaling laws on the design parameters; which as a result requires a different permutation of the invoked averaging and singular perturbation theorems to analyze the stability and convergence of the system. 
In the Nash analysis, the design parameter scaling yielded simultaneous coupled partial--gradient dynamics for both players given by 
\begin{align} 
    \dot{\bar x}_1=-\frac{\alpha_1k_1}{2}\partial_{x_1}J_1(\bar x_1, \bar x_2),  \label{eq:Nash-dynamics-x1}\\
    \varepsilon\dot{\bar x}_2=-\frac{\alpha_2k_2}{2}\partial_{x_2}J_2(\bar x_1, \bar x_2), \label{eq:Nash-dynamics-x2}
\end{align}
for which the stationary NE conditions $\partial_{x_1}J_1(x_1^{\textrm N}, x_2^{\textrm N})=0$, and $\partial_{x_2}J_2(x_1^{\textrm N}, x_2^{\textrm N})=0$ are satisfied, where $(x_1^{\textrm N}, x_2^{\textrm N})$ denotes the unique NE \cite{facchinei2003finite,wu2011games}.

In contrast, the present analysis imposes a strict time-scale hierarchy on the design variables. The two-dimensional Lie--bracket averaging is performed in a manner that produces a partially averaged system amenable to singular perturbation. The scaling is then chosen so that strong time-scale separation forces the follower dynamics to converge rapidly to the best-response manifold $x_2=h(x_1)$, after which a one-dimensional Lie--bracket averaging is used, resulting in the leader dynamics evolving along the full gradient of the reduced cost $\frac{d}{d x_1}J_{\text{L}}(x_1,h(x_1))$. The sequenced analysis using averaging and singular perturbation theorems, as presented in the proof of Theorem~\ref{thm:Avg-SP-Stack} and shown in Fig.~\ref{fig:state-xforms}, reflects the hierarchical structure characteristic of a Stackelberg game, leading to the SE condition $\frac{d}{d x_1}J_{\text{L}}(x_1^{\textrm S},h(x_1^{\textrm S}))=0$ and $x_2^{\textrm S}=h(x_1^{\textrm S})$. Thus, the same nES dynamics can result in convergence to either a neighborhood of a NE or a SE, depending on the scaling of the design parameters.

\section{A Simple Example}\label{sec:Simple-Example}

We proceed to use the nES algorithm \eqref{eq:x1}--\eqref{eq:x2} to demonstrate the algorithm's ability to converge to either a neighborhood of the unique NE or SE depending on the relative scaling of the design parameters. Consider the quadratic costs
\begin{align}
    J_{\text{L}}(x_1,x_2) =& \tfrac{1}{2}x_1^2 + 2x_1x_2, \label{eq:ex-J1}     \\
    J_{\text{F}}(x_1,x_2) =& \tfrac{1}{2}(x_2 - 2x_1 + 1.5)^2, \label{eq:ex-J2}
\end{align}
and it can be verified that \(J_{\text{L}}\) and \(J_{\text{F}}\) satisfy Assumptions~\ref{asm:A1}--\ref{asm:A4}. The corresponding Nash and Stackelberg equilibria are
$(x_1^{\textrm N},x_2^{\textrm N})=(0.6,-0.3)$ and $(x_1^{\textrm S},x_2^{\textrm S})=(0.3\bar3,-0.8\bar3)$.

To illustrate practical convergence to the NE, we set $\varepsilon=0.75$, \(\alpha_1=\alpha_2=1.0\times10^{-2}\), and \(k_1=k_2=5\). Choosing \(\omega_1=10.0\) rad/s and \(\omega_2=\sqrt{2}\,\omega_1\) induces the partial-gradient dynamics in \eqref{eq:Nash-dynamics-x1}--\eqref{eq:Nash-dynamics-x2}. As shown in earlier work, the resulting nES dynamics are SPUAS with respect to the NE \cite{Ratto2026ACC}, where the trajectories are denoted using $(x_1, x_2)$-Nash in Fig.~\ref{fig:nES-example-phase} and Fig.~\ref{fig:nES-example-time}.

For the Stackelberg case, we retain \(\alpha_1=1.0\times10^{-2}\), \(k_1=2\), and \(\omega_1=10.0\) rad/s. The follower's adaptation parameters are set to $\alpha_2=0.1$, \(k_2=500\), and the follower's dithering frequency is set to $\omega_2=50\cdot\sqrt{2}\,\omega_1$ rad/s. By Theorem~\ref{thm:Avg-SP-Stack}, the original nES dynamics are approximated by the averaged reduced-order model \eqref{eq:avg:x1-lb} and are \(\upsilon\)-SPUAS with respect to the SE, where the trajectories are denoted using $(x_1, x_2)$-Stack in Fig.~\ref{fig:nES-example-phase} and Fig.~\ref{fig:nES-example-time}.

Notice that the design parameters satisfy the hierarchical structure shown in Fig.~\ref{fig:timescale-diag},
\begin{equation}\label{eq:design-rule-of-thumb}
    \alpha_1k_1 \ll \omega_1 \ll \alpha_2k_2 \ll \omega_2,
\end{equation}
which naturally fits the leader-follower structure of Stackelberg games. The follower operates on a timescale determined by selecting \(\omega_2\) and adaptation gain \(\alpha_2k_2\), resulting in partial-gradient descent dynamics with respect to the follower's cost $\partial_{x_2}J_{\text{F}}(x_1,x_2)$. The leader, in turn, chooses \(\omega_1\) and \(\alpha_1k_1\), inducing a time-scale separation. The leader then evolves on a slower timescale, allowing it to respond with information from the follower's choice and thereby induce descent along $\frac{d}{dx_1}J_{\text{L}}(x_1,h(x_1))$. 

\begin{figure*}[t!]
\centering
\includegraphics[width=\textwidth]{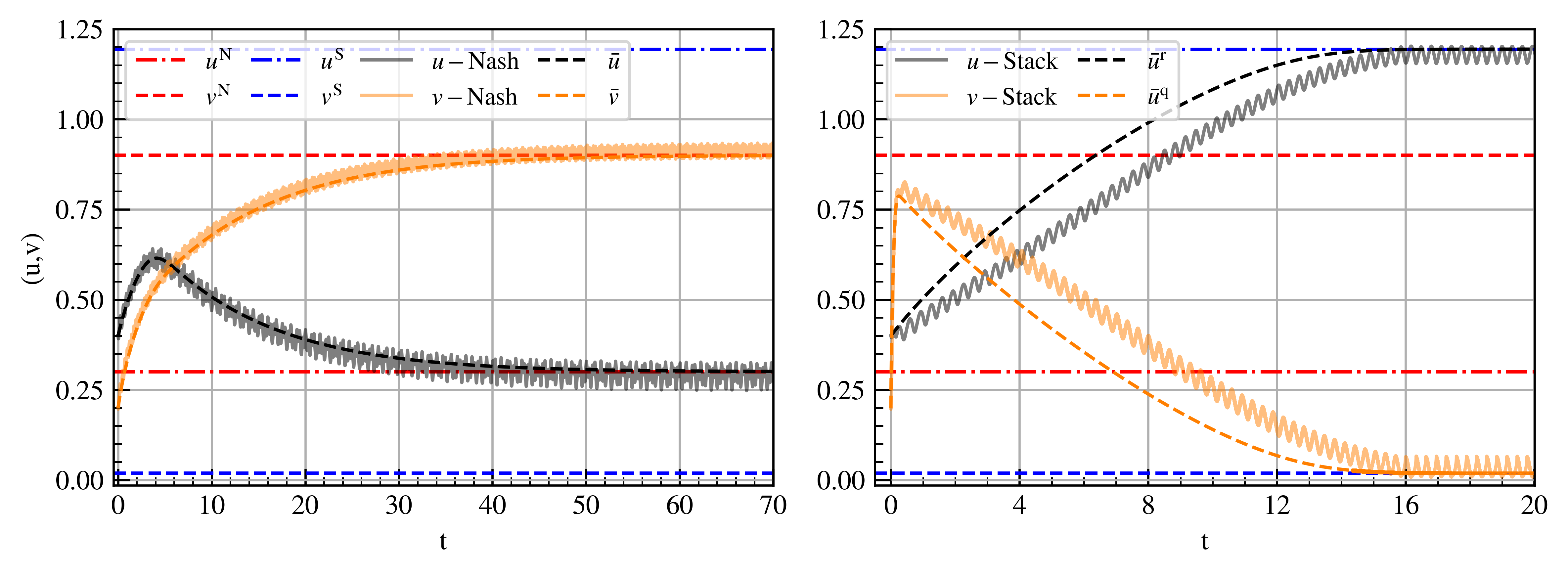}
\caption{Time series plots for the Fish War example in Section \ref{sec:fishwar}, showing the practical convergence to the Nash (left) versus Stackelberg (right) equilibria.} 
\label{fig:nES-fish-war-time}
\end{figure*}

\section{Fish War Example}
\label{sec:fishwar}

We now consider a problem from the game--theory literature known as the \emph{Fish War}, reported in \cite{vallee1999off}, originally studied in \cite{levhari1980great} and \cite{li1987distributed}. We adopt the same notation for the player variables as in \cite{vallee1999off}, where $(u,v):=(x_1,x_2)$ denote the current consumption levels of two countries, which incur costs
\begin{align}
J_{\rm{L}}(u,v) &= -\log u \;-\; \beta_L \log\!\Big(x - u - v^{\mu_L}\Big)^{\tau}, \label{eq:fishJL}\\
J_{\rm{F}}(u,v) &= -\log v \;-\; \beta_F \log\!\Big(x - v - u^{\mu_F}\Big)^{\tau}, \label{eq:fishJF}
\end{align}
over the feasible set
\begin{equation*}
\mathcal{D}=\Big\{(u,v): u\ge 0,\; v\ge 0,\; u+v^{\mu_L}\le x,\; u^{\mu_F}+v\le x\Big\}.
\end{equation*}
Here $x>0$ represents the fish population, $\beta_L,\beta_F\in(0,1]$ are discount factors, $\tau\in(0,1)$, and $\mu_L,\mu_F>1$ are curvature exponents \cite{vallee1999off}. We use the same parameter values as in \cite{vallee1999off},
\[
(\tau,\mu_L,\mu_F,\beta_L,\beta_F,x)=(0.2852,1.1,1.2,0.8,0.48,1.259),
\]
and the reported Nash and Stackelberg equilibria are $(u^{\rm{N}},v^{\rm{N}}) = (0.3,\,0.9)$ and $(u^{\rm{S}},v^{\rm{S}}) = (1.19426,\,0.01896)$, with corresponding costs reported in \cite{vallee1999off}.

Despite not meeting Assumptions~\ref{asm:A1}--\ref{asm:A4}, the mechanism behind our main result applies \emph{locally} because fundamentally Theorem~\ref{thm:Avg-SP-Stack} depends on: (i) local regularity and bounded derivatives on the compact set explored by trajectories, (ii) a locally unique follower best response $v=h(u)$ with local exponential stability of the follower’s averaged dynamics, and (iii) local stability of the leader’s reduced averaged dynamics. 

To induce practical convergence to the NE, we choose $\varepsilon=0.75$, $\alpha_1=\alpha_2=1.0\times 10^{-2}$, $k_1=k_2=10$, with dithering frequencies $\omega_1=20$ rad/s, and $\omega_2=\sqrt{2}\,\omega_1$ rad/s. As in the quadratic example, this choice of frequencies yields the partial-gradient structure associated with the Nash-seeking dynamics. Since the two players evolve on comparable timescales, neither player fully settles to a local best response of the other before the other updates; the resulting behavior is consistent with simultaneous adaptation toward the NE. The corresponding phase space and time series plots in Fig.~\ref{fig:nES-fish-war-phase} and Fig.~\ref{fig:nES-fish-war-time} show convergence to a neighborhood of the reported NE $(u^{\rm N},v^{\rm N})$, where the trajectories are denoted using $(u, v)$-Nash.

To demonstrate practical convergence to the SE, we retain the leader parameters $\alpha_1=1.0\times 10^{-2}$, $k_1=10$, $\omega_1=20$ rad/s, but increase the follower adaptation parameters and dithering frequency to $\alpha_2=5.0\times 10^{-2}$, $k_2=100$, and $\omega_2=50\cdot\sqrt{2}\,\omega_1$. These parameters again enforce the hierarchical separation between the leader and follower dynamics summarized by \eqref{eq:design-rule-of-thumb}. The resulting trajectories, shown in Fig.~\ref{fig:nES-fish-war-phase} and Fig.~\ref{fig:nES-fish-war-time}, converge to a neighborhood of the reported SE $(u^{\rm S},v^{\rm S})$, where the trajectories are denoted using $(u, v)$-Stack. Although the Fish War does not satisfy the standing assumptions globally, the simulations demonstrate that the same nES architecture can still recover either Nash-like or Stackelberg-like behavior through appropriate parameter scaling.

\section{Conclusion}
The nES algorithm was shown to practically converge to a SE, as compared to a previous study showing convergence to a neighborhood of a NE. The results reveal an important conceptual insight: convergence to an equilibrium type, Nash versus Stackelberg, is not determined by modifying the feedback structure, but by enforcing a hierarchical structure through parameter scaling. The nES algorithm provides a unifying model--free framework capable of implementing either simultaneous or hierarchical optimization in multi--agent systems. Future work will focus on analyzing the general form of nES for $n$-nested games, in addition to studying the application of nES to time-varying dynamical systems with $k$-dimensional states for particle accelerators. 

\bibliographystyle{ieeetr}
\bibliography{ref}
\end{document}